\author{Johannes Sj\"ostrand\\
\small Centre de Math\'ematiques Laurent Schwartz\\
\small Ecole Polytechnique\\
\small FR-91128 Palaiseau C\'edex\\
\footnotesize johannes@math.polytechnique\\
\footnotesize  and UMR 7640, CNRS
} \date{}
\title{Eigenvalue distribution for non-self-adjoint operators on
  compact manifolds with
  small multiplicative random perturbations}
\newtheorem{dref}{Definition}[section] 
\newtheorem{theo}[dref]{Theorem} \newtheorem{prop}[dref]{Proposition}
\newtheorem{remark}[dref]{Remark}
\newenvironment{proof}{\par\noindent{{\bf Proof.}}}{\hfill$\Box$
\medskip} 
 \newcommand\R{\mathbb{R}}
\newcommand{\ekv}[2]{\begin{equation}\label{#1}#2\end{equation}}
\newcommand{\eekv}[3]{\begin{eqnarray}\label{#1}#2 \\ #3
\nonumber\end{eqnarray}}
\newcommand{\eeekv}[4]{\begin{eqnarray}\label{#1}#2 \\ #3
\nonumber\\#4\nonumber\end{eqnarray}}
  \newcommand\iint{\int\hskip -2mm\int}
\newcommand{\no}[1]{(\ref{#1})} 
\begin{document}

\maketitle
\begin{abstract} In this work we extend a previous work about the Weyl
asymptotics of the distribution of eigenvalues of non-self-adjoint
diff\-erential operators with small multiplicative random
perturbations, by treating the case of operators on compact manifolds

\medskip
\par \centerline{\bf R\'esum\'e} Dans ce travail nous \'etendons 
un travail pr\'ec\'edent sur l'asymp\-totique de Weyl de la
distribution des valeurs propres d'op\'erateurs
diff\'erentiels avec des perturbations multiplicatives 
al\'eatoires petites,
en traitant le cas des op\'erateurs sur des vari\'et\'es compactes.

\end{abstract}

\tableofcontents

\section{Introduction}\label{int}
\setcounter{equation}{0}
This work is a direct continuation of \cite{Sj1}, devoted to
semi-classical pseudo\-differential operators on ${\bf R}^n$ with small multiplicative
random perturbations, which was partly based on the work by M.~Hager
and the author \cite{HaSj}. The main goal in the present work is to
obtain the same results as in \cite{Sj1} but with ${\bf R}^n$ replaced
by a compact smooth $n$-dimensional manifold $X$. Hopefully this
extension will make it possible to obtain almost sure Weyl asymptotics
for the large eigenvalues of elliptic operators on compact
manifolds. Such results in the case of $X=S^1$ have recently been
obtained by W. Bordeaux-Montrieux \cite{Bo}.

On $X$ we consider an $h$-differential operator $P$ which in local
coordinates takes the form,
\ekv{int.1}
{
P=\sum_{|\alpha |\le m}a_\alpha (x;h)(hD)^\alpha ,
} 
where we use standard multiindex notation and let
$D=D_x=\frac{1}{i}\frac{\partial }{\partial x}$. We assume that the 
coefficients $a_\alpha  $ are uniformly bounded in $C^\infty $ for
$h\in ]0,h_0]$, $0<h_0\ll 1$. (We will also discuss the case when we
only have some Sobolev space control of $a_0(x)$.) Assume
\eekv{int.2}
{
&&a_\alpha (x;h)=a_\alpha ^0(x)+{\cal O}(h) \mbox{ in }C^\infty ,}
{&&a_\alpha (x;h)=a_\alpha (x)\hbox{ is independent of }h\hbox{ for
  }|\alpha |=m.
}
Notice that this assumption is invariant under changes of local
coordinates. 

Also assume that $P$ is elliptic in the classical sense, uniformly
with respect to $h$:
\ekv{int.3}
{
|p_m(x,\xi )|\ge \frac{1}{C}|\xi |^m,
}
for some positive constant $C$, where
\ekv{int.4}
{
p_m(x,\xi )=\sum_{|\alpha |=m}a_\alpha (x)\xi ^\alpha 
}
is invariantly defined as a function on $T^*X$.
It follows that $p_m(T^*X)$ is a closed cone in ${\bf C}$ and we
assume that 
\ekv{int.5}
{
p_m(T^*X)\ne {\bf C}.
}
If $z_0\in {\bf C}\setminus p_m(T^*X)$, we see that $\lambda
z_0\not\in \Sigma (p)$ if $\lambda \ge 1$ is sufficiently large and
fixed, where $\Sigma (p):=p(T^*X)$ and $p$ is the semiclassical
principal symbol
\ekv{int.6}
{
p(x,\xi )=\sum_{|\alpha |\le m}a_\alpha ^0(x)\xi ^\alpha .
}
Actually, (\ref{int.5}) can be replaced by the weaker condition that
$\Sigma (p)\ne {\bf C}$.

\par Standard elliptic theory and analytic Fredholm theory now show
that if we consider $P$ as an unbounded operator: $L^2(X)\to L^2(X)$
with domain ${\cal D}(P)=H^m(X)$ (the Sobolev space of order $m$),
then $P$ has purely discrete spectrum.

\par We will need the symmetry assumption
\ekv{int'.7}{P^*=\Gamma P\Gamma ,}
where $P^*$ denotes the formal complex adjoint of $P$ in $L^2(X,dx)$,
with $dx$ denoting some fixed smooth postive density of integration
and $\Gamma $ is the antilinear operator of complex conjugation;
$\Gamma u=\overline{u}$. Notice that this assumption implies that 
\ekv{int'.8}
{
p(x,-\xi )=p(x,\xi ),
}
and conversely, if $p$ fulfills (\ref{int'.8}), then we get
(\ref{int'.7}) if we replace $P$ by $\frac{1}{2}(P+\Gamma P^*\Gamma $),
which has the same semi-classical principal symbol $p$.

\par Let $V_z(t):=\mathrm{vol\,}(\{ \rho \in {\bf R}^{2n};\,|p(\rho
)-z|^2 \le t\} )$. For $\kappa \in ]0,1]$, $z\in \Omega $, we consider
the property that
\ekv{int.6.2}{V_z(t)={\cal O}(t^ \kappa ),\ 0\le t \ll 1.} 
Since $r\mapsto p(x,r\xi )$ is a polynomial of degree $m$ in $r$ with
non-vanishing leading coefficient, we see that (\ref{int.6.2}) holds
with $\kappa =1/(2m)$.

The random potential will be of the form 
\ekv{int.6.3}
{q_\omega (x)=\sum_{0<\mu _k\le L}\alpha _k(\omega )\epsilon
_k(x),\ |\alpha |_{{\bf C}^D}\le R,}
where $\epsilon _k$ is the orthonormal basis of eigenfunctions of
$h^2\widetilde{R}$, where $\widetilde{R}$ is an $h$-independent
positive elliptic 2nd order operator on $X$ with smooth 
coefficients. Moreover, $h^2\widetilde{R}\epsilon _k=\mu
_k^2 \epsilon _k$, $\mu _k>0$. 
We choose $L=L(h)$, $R=R(h)$ in the interval
\eekv{int.6.4}
{h^{\frac{\kappa -3n}{s-\frac{n}{2}-\epsilon }}\ll L\le Ch^{-M},&&
M\ge \frac{3n-\kappa }{s-\frac{n}{2}-\epsilon },}
{\frac{1}{C}h^{-(\frac{n}{2}+\epsilon )M+\kappa -\frac{3n}{2}}\le R\le
 C h^{-\widetilde{M}},&& \widetilde{M}\ge \frac{3n}{2}-\kappa
  +(\frac{n}{2}+\epsilon )M,}
for some $\epsilon \in ]0,s-\frac{n}{2}[$, $s>\frac{n}{2}$,
so by Weyl's law for the large eigenvalues of elliptic
self-adjoint operators, the dimension $D$ is of the order of magnitude
$(L/h)^n$. We introduce the  small parameter 
$\delta =\tau _0 h^{N_1+n}$, $0<\tau _0\le \sqrt{h}$, where 
\ekv{int.6.4.3}
{
N_1:=\widetilde{M}+sM+\frac{n}{2}.
} 
The randomly perturbed operator is
\ekv{int.6.4.5}
{
P_\delta =P+\delta h^{N_1}q_\omega =:P+\delta Q_\omega .
}

\par The random variables $\alpha _j(\omega )$ will have a
joint probability distribution \ekv{int.6.5}{P(d\alpha )=C(h)e^{\Phi
(\alpha ;h)}L(d\alpha ),} where for some $N_4>0$,
\ekv{int.6.6}{ |\nabla _\alpha \Phi |={\cal
O}(h^{-N_4}),} and $L(d\alpha )$ is the
Lebesgue measure. ($C(h)$ is the normalizing constant, 
assuring that the probability of
$B_{{\bf C}^D}(0,R)$ is equal to 1.) 

\par We also need the parameter 
\ekv{int.6.7.5}{\epsilon _0(h)=(h^{\kappa }+h^n\ln 
\frac{1}{h})(\ln \frac{1}{\tau _0}+(\ln \frac{1}{h})^2)} and assume
that $\tau _0=\tau _0(h)$ is not too small, so that $\epsilon _0(h)$ is
small. Let $\Omega \Subset {\bf C}$ be open, simply connected not
entirely contained in $\Sigma (p)$. The main result of this work is:
\begin{theo}\label{int1} Under the assumptions above, let 
$\Gamma \Subset \Omega $ have smooth boundary, let $\kappa \in
]0,1]$ be the parameter in \no{int.6.3}, \no{int.6.4}, \no{int.6.7.5} and assume that 
\no{int.6.2} holds uniformly for $z$ in a
neighborhood of $\partial \Gamma $.  Then there
exists a constant $C>0$ such that for
$C^{-1}\ge r>0$,
$\widetilde{\epsilon }\ge C \epsilon _0(h)$ 
we have with probability 
\ekv{int.6.8}{
\ge 1-\frac{C\epsilon _0(h)}
{rh^{n+\max (n(M+1), N_4+\widetilde{M})}}
e^{-\frac{\widetilde{\epsilon }}{C\epsilon _0(h)}} }
that:
\eekv{int.7}
{
&&|
\#(\sigma (P_\delta )\cap \Gamma )-\frac{1}{(2\pi h)^n
}\mathrm{vol\,}(p^{-1}(\Gamma ))
|\le
}
{&&
\frac{C}{h^n}\left( \frac{\widetilde{\epsilon }}{r}
+C(r+\ln (\frac{1}{r})\mathrm{vol\,}(p^{-1}(\partial
\Gamma +D(0,r))))
 \right).}
Here $\#(\sigma (P_\delta )\cap \Gamma )$ denotes the number of
eigenvalues of $P_\delta $ in $\Gamma $, counted with their algebraic multiplicity.
\end{theo}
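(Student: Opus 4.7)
The plan is to follow the scheme of \cite{Sj1,HaSj} and adapt its key ingredients to the compact manifold setting. We reduce counting eigenvalues of $P_\delta$ in $\Gamma$ to counting zeros of a single holomorphic function $F(z, \omega) = \det\bigl((P_\delta - z)(P - \lambda z_0)^{-N}\bigr)$, where $\lambda z_0 \notin \Sigma(p)$ by (\ref{int.5}) and $N$ is taken large enough that the argument of $\det$ lies in $1 + $ trace class; compactness of $X$ makes $(P - \lambda z_0)^{-N}$ trace class as soon as $N > n/m$. The zeros of $F(\cdot, \omega)$ in $\Gamma$ are exactly the eigenvalues of $P_\delta$ in $\Gamma$, counted with algebraic multiplicity. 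We then apply the quantitative Jensen-type counting lemma of \cite{HaSj}: given an upper bound $\log|F(z, \omega)| \leq \varphi(z) + o(h^{-n})$ throughout a neighborhood of $\Gamma$ and a matching lower bound $\log|F(z, \omega)| \geq \varphi(z) - \widetilde{\epsilon}/h^n$ at a sufficiently dense set of points near $\partial \Gamma + D(0, r)$, the number of zeros of $F$ in $\Gamma$ agrees with $\frac{1}{2\pi}\int_\Gamma \Delta \varphi$ up to the error on the right-hand side of (\ref{int.7}).

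\textbf{Upper bound (deterministic).} The natural choice is $\varphi(z) = (2\pi h)^{-n} \int_{T^*X} \log|p(\rho) - z|\, d\rho$, whose distributional Laplacian is $(2\pi h)^{-n}$ times the push-forward of Liouville measure under $p$, so $\frac{1}{2\pi}\int_\Gamma \Delta \varphi = (2\pi h)^{-n}\mathrm{vol}(p^{-1}(\Gamma))$. Semiclassical Weyl asymptotics for the singular values of $P - z$ on $X$, obtained via pseudodifferential calculus, a partition of unity and assumption (\ref{int.6.2}) on the volume of small-symbol sets, give $\log|\det((P - z)(P - \lambda z_0)^{-N})| \leq \varphi(z) + \mathcal{O}(h^{-n}(h^\kappa + h^n \log(1/h)))$. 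Since $\delta \leq \sqrt{h}\, h^{N_1 + n}$ is polynomially small in $h$, the contribution of $\delta Q_\omega$ to the upper bound is negligible, uniformly in $\omega$ with $|\alpha|_{\C^D} \leq R$.

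\textbf{Probabilistic lower bound (main step).} Fix $z$ near $\partial\Gamma$ and set up a Grushin problem for $P - z$ whose auxiliary operators are built from the first $N(z) \approx (2\pi h)^{-n} V_z(h^{2\kappa})$ left/right singular vectors, so that the small singular values of $P - z$ are identified with the zeros of an effective matrix $E_{-+}(z) \in \C^{N(z) \times N(z)}$. Regard $E_{-+}(z, \omega)$ as a function of the random vector $\alpha \in \C^D$ with density (\ref{int.6.5}). Expand $\delta Q_\omega$ in the basis $(\epsilon_k)$ and use the Weyl law for $h^2 \widetilde{R}$ together with Sobolev embedding ($s > n/2$) to control $L^\infty$-norms of linear combinations, in order to show that the map $\alpha \mapsto \delta\, (\langle Q_\omega e_+^j, e_-^i\rangle)_{i,j}$ is a surjection onto $\C^{N(z) \times N(z)}$ with quantitative non-degeneracy. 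The anti-concentration estimate of \cite{HaSj} for densities of type (\ref{int.6.5})--(\ref{int.6.6}) then yields $\log|\det E_{-+}(z, \omega)| \geq - \widetilde{\epsilon}/h^n$ with probability at least $1 - \exp(-\widetilde{\epsilon}/(C\epsilon_0(h)))$. The scales in (\ref{int.6.4})--(\ref{int.6.4.5}) and the exponent $N_1$ in (\ref{int.6.4.3}) are calibrated precisely so that $\delta Q_\omega$ smears the small singular values of $P - z$ by at least $\sim \tau_0 h^{N_1 + n}$ while remaining too small to spoil ellipticity or the upper bound.

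\textbf{Uniformization and conclusion.} Derivative estimates on $z \mapsto \log|F(z, \omega)|$, coming from the trace-class bound of the previous steps, show that the pointwise lower bound at a single $z$ extends to a $h^N$-neighborhood with controlled loss. A union bound over a grid of $\mathcal{O}(h^{-n - \max(n(M+1), N_4 + \widetilde{M})}/r)$ points near $\partial\Gamma + D(0, r)$ then gives the total failure probability in (\ref{int.6.8}), and the counting lemma finishes the argument; the error terms $r$ and $\ln(1/r)\mathrm{vol}(p^{-1}(\partial\Gamma + D(0, r)))$ come from shifting the contour off the non-smooth part of $\varphi$ (which lies in $\Sigma(p)$). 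The hardest point is the surjectivity / anti-concentration step: on $\R^n$ in \cite{Sj1} one exploited explicit bases of Hermite-type functions adapted to the phase-space geometry, whereas on $X$ one must combine the sharp Weyl law for $\widetilde{R}$, Sobolev embedding and semiclassical microlocalization of the eigenfunctions $\epsilon_k$ to show that the finite-dimensional family $\{q_\omega\}$ effectively reaches every small-singular-value direction of $P - z$, uniformly in $z$ and $h$.
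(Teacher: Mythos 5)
Your high-level scheme (reduce eigenvalue counting to zero counting of a relative determinant, obtain a deterministic upper bound via semiclassical trace estimates, and a probabilistic lower bound via a Grushin reduction, then invoke the Jensen-type counting lemma of \cite{HaSj}) is indeed the skeleton of the paper's proof. However, there are several concrete gaps.

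First, the regularization is wrong. You propose $F(z,\omega)=\det\bigl((P_\delta-z)(P-\lambda z_0)^{-N}\bigr)$ and assert that taking $N>n/m$ puts the argument in $1+$\,trace class. It does not: if $(P-\lambda z_0)^{-N}$ is trace class, then $(P_\delta-z)(P-\lambda z_0)^{-N}$ is itself trace class, not identity plus trace class, and the Fredholm determinant of a trace class operator is not what is wanted. The paper instead builds a modified operator $\widetilde{P}=P+\mathrm{Op}_h(\widetilde{p}-p)$ where $\widetilde{p}$ agrees with $p$ outside a fixed compact set and $\widetilde{p}-z$ is nowhere vanishing for $z\in\overline\Omega$; then $\widetilde{P}-P$ is smoothing (trace class on a compact manifold), $\widetilde{P}-z$ is uniformly invertible, and $P_{\delta,z}=(\widetilde{P}_\delta-z)^{-1}(P_\delta-z)=1-K_{\delta,z}$ is genuinely $1+$\,trace class with $z$-holomorphic dependence. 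For the same reason your weight $\varphi(z)=(2\pi h)^{-n}\int\log|p-z|\,d\rho$ diverges at infinity in $T^*X$; the paper's $\iint\ln|p_z|\,dxd\xi$ with $p_z=(p-z)/(\widetilde p-z)$ is the correctly regularized version.

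Second, you omit the symmetry hypothesis (\ref{int'.7}), $P^*=\Gamma P\Gamma$, which is essential and cannot be removed (cf.\ Remark~\ref{int3}). It is what lets one replace the right singular vectors $f_j$ by $\overline{e_j}$, turning the effective Grushin matrix entries into $M_{j,k}=\int q\,e_k e_j\,dx$ (a symmetric, unconjugated bilinear form). Without this, the perturbation step fails. Third, your probabilistic step is a one-shot "surjectivity / anti-concentration" argument; the paper instead needs the multi-scale iteration scheme (Sections 5--7 of \cite{Sj1}, summarized here as Proposition~\ref{en1}) because a single admissible perturbation only controls a fixed fraction ($\sim N/2$) of the small singular values at a time, and one must iterate over geometrically shrinking thresholds $\tau_0^{(k)}=\tau_0 h^{kN_2}$ to push all of them up. Relatedly, you suggest that the compact-manifold case is harder than $\R^n$ for this step; the paper observes the opposite — with $\epsilon_k$ being eigenfunctions of an elliptic operator on $X$, there is no support condition to worry about and the argument actually simplifies. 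Finally, the technical core of the adaptation to manifolds, namely the exotic symbol class $S_\Lambda(\widetilde m)$ and the trace-norm/resolvent estimates of Section~\ref{fu} leading to Propositions~\ref{fu6} and~\ref{fu8}, is not addressed at all in your outline, yet it is where the real new work of the paper lies.
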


Actually, we shall prove the theorem for the slightly more general
operators, obtained by replacing $P$ by $P_0=P+\delta
_0(h^{\frac{n}{2}}q_1^0+q_2^0)$, where $\Vert q_1^0\Vert_{H^s_h}\le
1$, $\Vert q_2\Vert_{H^s}\le 1$, $0\le \delta _0\le h$. Here, $H^s$ is
the standard Sobolev space and $H_h^s$ is the same space with the
natural semiclassical $h$-dependent norm. See Section \ref{hs}.

As in \cite{HaSj} we also have a result valid simultaneously for a
family ${\cal C}$ of domains $\Gamma \subset \Omega $ satisfying the
assumptions of Theorem \ref{int1} uniformly in the natural sense:
With a probability 
\ekv{int.8}{
\ge 1-\frac{{\cal O}(1)\epsilon _0(h)}{r^2h^{n+\max (n(M+1), N_4+\widetilde{M})}}e^{-\frac{\widetilde{\epsilon }}{C\epsilon _0(h)}}, } the
estimate \no{int.7} holds simultaneously for all $\Gamma \in {\cal C}$.

\par In the introduction of \cite{Sj1} there is a discussion about the
choice of parameters and a corollary which carry over to the present
situation without any changes

\begin{remark}\label{int2}
{\rm When $\widetilde{R}$ has real coefficients, we may assume that the
eigenfunctions $\epsilon _j$ are real. Then (cf Remark 8.3 in \cite{Sj1}) we may
restrict $\alpha $ in (\ref{int.6.3}) to be in ${\bf R}^D$ so that
$q_\omega $ is real, still with $|\alpha |\le R$, and change
$C(h)$ in (\ref{int.6.5}) so that $P$ becomes a probability measure on 
$B_{{\bf R}^D}(0,R)$. Then Theorem \ref{int1} remains valid.}
\end{remark}
\begin{remark}\label{int3}
{\rm The assumption (\ref{int.6}) cannot be
completely eliminated. Indeed, let $P=hD_x+g(x)$ on ${\bf T}={\bf
  R}/(2\pi {\bf Z})$ where $g$ is smooth and complex valued. Then (cf
Hager \cite{Ha2}) the spectrum of $P$ is contained in the line 
$\Im z = \int_0^{2\pi }\Im g(x)dx/(2\pi )$. This line will vary only very
little under small multiplicative perturbations of $P$ so 
Theorem \ref{int1} cannot hold in this case.}
\end{remark}

The proof follows the general scheme of \cite{Sj1}, we will recall the
intermediate steps but give proofs only when there is a difference
between the case of ${\bf R}^n$ and that of compact
manifolds. Actually, there will also be some simplifications since we
have no support condition on the random potential.

\medskip
\par\noindent {\bf Acknowledgement.}
A large part of this work was completed while attending the
special program ``Complex Analysis of Several Variables'' at the 
Mittag-Leffler Institute in May--June 2008. We are grateful to the
organizers and the staff for very stimulating and pleasant working conditions.
\section{Semiclassical Sobolev spaces and multiplication}
\label{al}
\setcounter{equation}{0}
We let $H_h^s({\bf R}^n)\subset {\cal S}'({\bf R}^n)$, $s\in {\bf R}$, 
denote the semiclassical Sobolev space of order
$s$ equipped with the norm $\Vert \langle hD\rangle^s u\Vert$ where
the norms are the ones in $L^2$, $\ell^2$ or the corresponding
operator norms if nothing else
is indicated. Here $\langle hD\rangle= (1+(hD)^2)^{1/2}$. Let
$\widehat{u}(\xi )=\int e^{-ix\cdot \xi }u(x)dx$ denote the Fourier
transform of the tempered distribution $u$ on ${\bf R}^n$. In
\cite{Sj1} we recalled the following result:
\begin{prop}\label{al1}
Let $s>n/2$. Then there exists a constant $C=C(s)$ such that for all
$u,v\in H_h^s({\bf R}^n)$, we have $u\in L^\infty ({\bf R}^n) $, 
$uv\in H_h^s({\bf R}^n)$ and 
\ekv{al.1}
{
\Vert u\Vert_{L^\infty }\le Ch^{-n/2}\Vert u\Vert_{H_h^s},
}
\ekv{al.2}
{
\Vert uv\Vert_{H_h^s} \le Ch^{-n/2} \Vert u\Vert_{H_h^s} \Vert v\Vert_{H_h^s}.
}
\end{prop}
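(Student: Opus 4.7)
The plan is to deduce both inequalities from the Fourier characterization of $H^s_h$ together with Cauchy--Schwarz and Young's convolution inequality, just as in the non-semiclassical case, keeping careful track of the scaling in $h$.

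For \no{al.1}, I would start from the Fourier inversion formula $u(x)=(2\pi)^{-n}\int e^{ix\cdot \xi}\widehat{u}(\xi)\,d\xi$ and write
\[
|u(x)|\le (2\pi)^{-n}\int \langle h\xi\rangle^{-s}\,\langle h\xi\rangle^{s}|\widehat{u}(\xi)|\,d\xi.
\]
Cauchy--Schwarz then bounds this by $(2\pi)^{-n}\|\langle h\xi\rangle^{-s}\|_{L^2}\,\|\langle h\xi\rangle^{s}\widehat u\|_{L^2}$. After the substitution $\eta=h\xi$ the first factor equals $h^{-n/2}\|\langle\eta\rangle^{-s}\|_{L^2}$, which is finite precisely because $s>n/2$, and Plancherel identifies the second factor with $(2\pi)^{n/2}\|u\|_{H_h^s}$; combining yields $\|u\|_{L^\infty}\le Ch^{-n/2}\|u\|_{H_h^s}$.

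For \no{al.2}, since $\widehat{uv}=(2\pi)^{-n}\widehat u*\widehat v$, the key analytic input is the elementary peetre-type bound
\[
\langle h\xi\rangle^{s}\le C_s\bigl(\langle h\eta\rangle^{s}+\langle h(\xi-\eta)\rangle^{s}\bigr),\qquad s\ge 0,
\]
which follows from $\langle h\xi\rangle\le\sqrt 2\bigl(\langle h\eta\rangle+\langle h(\xi-\eta)\rangle\bigr)$. Inserting this into the convolution and applying Young's inequality $\|f*g\|_{L^2}\le\|f\|_{L^2}\|g\|_{L^1}$ gives
\[
\|\langle h\xi\rangle^{s}\widehat{uv}\|_{L^2}\le C\bigl(\|\langle h\cdot\rangle^{s}\widehat u\|_{L^2}\|\widehat v\|_{L^1}+\|\widehat u\|_{L^1}\|\langle h\cdot\rangle^{s}\widehat v\|_{L^2}\bigr).
\]
The $L^2$ factors are, up to a universal constant, $\|u\|_{H_h^s}$ and $\|v\|_{H_h^s}$. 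To handle the $L^1$ norms of $\widehat u$ and $\widehat v$, I would invoke exactly the Cauchy--Schwarz argument of the first step, which gives $\|\widehat u\|_{L^1}\le Ch^{-n/2}\|u\|_{H^s_h}$ (and similarly for $v$). Collecting the factors produces the desired estimate $\|uv\|_{H^s_h}\le Ch^{-n/2}\|u\|_{H^s_h}\|v\|_{H^s_h}$.

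There is no real obstacle here: the proposition is merely the semiclassical version of the classical Sobolev embedding $H^s\hookrightarrow L^\infty$ and of the algebra property of $H^s$ for $s>n/2$, and the only thing that needs to be monitored is how the scaling $D\mapsto hD$ produces the factor $h^{-n/2}$ through the change of variables $\eta=h\xi$ in the $L^2$ norm of $\langle h\xi\rangle^{-s}$.
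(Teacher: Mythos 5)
The paper itself does not prove this proposition; it is quoted from \cite{Sj1} without proof (``In \cite{Sj1} we recalled the following result''). Your argument is correct and is the standard semiclassical adaptation of the $H^s$ Sobolev embedding and algebra property, which is exactly what one would expect to find in \cite{Sj1}. The scaling bookkeeping is right: after $\eta=h\xi$ the weight $\|\langle h\cdot\rangle^{-s}\|_{L^2}$ contributes $h^{-n/2}\|\langle\cdot\rangle^{-s}\|_{L^2}$, which is finite for $s>n/2$ and is the only source of the negative power of $h$; and in (\ref{al.2}) the split $\langle h\xi\rangle^s\lesssim\langle h\eta\rangle^s+\langle h(\xi-\eta)\rangle^s$ followed by Young $L^2*L^1\to L^2$ and the $L^1$ bound $\|\widehat u\|_{L^1}\le Ch^{-n/2}\|u\|_{H^s_h}$ (again Cauchy--Schwarz with the same weight) gives the claimed estimate with a single factor $h^{-n/2}$ rather than two, as required. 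No gaps.
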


We cover $X$ by
finitely many coordinate neighborhoods $X_1,...,X_p$ and for
each $X_j$, we let $x_1,...,x_n$ denote the corresponding local 
coordinates on $X_j$. Let $0\le \chi _j\in C_0^\infty (X_j)$ have the
property that $\sum_1^p\chi _j >0$ on $X$. Define $H_h^s(X)$ to be the
space of all $u\in {\cal D}'(X)$ such that 
\ekv{al.4}
{
\Vert u\Vert_{H_h^s}^2:=\sum_1^p \Vert \chi _j\langle hD\rangle^s \chi
_j u\Vert ^2 <\infty .
}
It is standard to show that this definition does not depend on the
choice of the coordinate neighborhoods or on $\chi _j$. With different
choices of these quantities we get norms in \no{al.4} which are
uniformly equivalent when $h\to 0$. In fact, this follows from the
$h$-pseudodifferential calculus on manifolds with symbols in the
H\"ormander space $S^m_{1,0}$, that we quickly reviewed in the
appendix in \cite{Sj1}.
An equivalent definition of $H_h^s(X)$ is the following: Let 
\ekv{al.5}
{
h^2\widetilde{R}=\sum (hD_{x_j})^*r_{j,k}(x)hD_{x_k}
}
be a non-negative elliptic operator with smooth coefficients on $X$,
where the star indicates that we take the adjoint with respect to some
fixed positive smooth density on $X$. Then $h^2\widetilde{R}$ is
essentially self-adjoint with domain $H^2(X)$, so
$(1+h^2\widetilde{R})^{s/2}:L^2\to L^2$ is a closed densely defined
operator for $s\in {\bf R}$, which is bounded precisely when $s\le
0$. Standard methods allow to show that $(1+h^2\widetilde{R})^{s/2}$
is an $h$-pseudodifferential operator with symbol in $S^s_{1,0}$ and
semiclassical principal symbol given by $(1+r(x,\xi ))^{s/2}$, where
$r(x,\xi )=\sum_{j,k}r_{j,k}(x)\xi _j\xi _k$ is the semiclassical
principal symbol of $h^2\widetilde{R}$.  See the appendix in
\cite{Sj1}.
The
$h$-pseudodifferential calculus gives for every $s\in {\bf R}$:
\begin{prop}\label{al2}
  $H_h^s(X)$ is the space of all $u\in {\cal D}'(X)$ such that 
$(1+h^2\widetilde{R})^{s/2}u\in L^2$ and the norm $\Vert u\Vert_{H_h^s}$ is
equivalent to $\Vert (1+h^2\widetilde{R})^{s/2}u\Vert$, uniformly when $h\to 0$.
\end{prop}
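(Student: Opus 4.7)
The plan is to reduce both characterizations of $H_h^s(X)$ to the action of elliptic semiclassical pseudodifferential operators of order $s$, and then to compare them via the $h$-pseudodifferential calculus on $X$ recalled in the appendix of \cite{Sj1}. First I would use the fact, already stated just above the proposition, that $A_s := (1+h^2\widetilde{R})^{s/2}$ is an elliptic element of $\mathrm{Op}(S^{s}_{1,0})$ with principal symbol $(1+r(x,\xi))^{s/2}$, comparable to $\langle\xi\rangle^{s}$. A standard parametrix construction then produces $B_s\in\mathrm{Op}(S^{-s}_{1,0})$ with $B_sA_s=I+K$, where $K$ is smoothing with $\|K\|_{H_h^{-N}\to H_h^{N}}=\mathcal{O}(h^\infty)$ for every $N$.

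For the bound $\|u\|_{H_h^s}\lesssim\|A_su\|_{L^2}$ (up to $\mathcal{O}(h^\infty)$ errors in weaker norms), I would observe that $E_j:=\chi_j\langle hD\rangle^s\chi_j$, pushed to local coordinates on $X_j$, is a compactly supported $h$-pseudo of order $s$; hence $E_jB_s\in\mathrm{Op}(S^0_{1,0})$ is $L^2$-bounded uniformly in $h$ by the semiclassical Calder\'on--Vaillancourt theorem. The identity $E_ju=(E_jB_s)A_su-E_jKu$, squared and summed over $j$, then yields $\|u\|_{H_h^s}^2\le C\|A_su\|_{L^2}^2+\mathcal{O}(h^\infty)\|u\|_{H_h^{-N}}^2$. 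For the reverse direction, I would form the positive operator $T := \sum_jE_j^2\in\mathrm{Op}(S^{2s}_{1,0})$, whose principal symbol is $\big(\sum_j\chi_j^4\big)\langle\xi\rangle^{2s}$, bounded below by a constant multiple of $\langle\xi\rangle^{2s}$ since $\sum_j\chi_j>0$, and note the identity $(Tu,u)=\|u\|_{H_h^s}^2$. A parametrix $C\in\mathrm{Op}(S^{-2s}_{1,0})$ with $CT=I+K'$, $K'$ smoothing, gives $A_s^*A_sC\in\mathrm{Op}(S^0_{1,0})$, hence uniformly $L^2$-bounded, and then $\|A_su\|_{L^2}^2=(A_s^*A_sCTu,u)-(A_s^*A_sK'u,u)$ produces the symmetric estimate.

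The hard part will be absorbing the $\mathcal{O}(h^\infty)$ remainders, which only control $u$ in weaker Sobolev norms. This I would handle by a bootstrap: either finiteness hypothesis ($A_su\in L^2$ on the one hand, or the norm (\ref{al.4}) finite on the other) together with the mapping properties of the $h$-pseudodifferential calculus forces $u\in H_h^{-N}(X)$ for every $N$, with quantitative control in terms of the given norm. The $\mathcal{O}(h^\infty)$ error can then be absorbed for $h$ sufficiently small, which simultaneously identifies the two candidate spaces and establishes uniform equivalence of their norms as $h\to 0$.
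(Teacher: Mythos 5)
The paper gives no proof of this proposition beyond the one-line reduction to the $h$-pseudodifferential calculus reviewed in the appendix of \cite{Sj1}, so your parametrix plan is precisely in the intended spirit, and the forward direction via $E_ju=(E_jB_s)A_su-E_jKu$ with $E_jB_s\in\mathrm{Op}(S^0_{1,0})$ is clean.

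One point is not quite closed as written. In the reverse direction you record the identity $\Vert A_su\Vert^2=(A_s^*A_sCTu,u)-(A_s^*A_sK'u,u)$ and then invoke the uniform $L^2$-boundedness of $A_s^*A_sC\in\mathrm{Op}(S^0_{1,0})$. But that boundedness only gives $|(A_s^*A_sCTu,u)|\lesssim\Vert Tu\Vert\,\Vert u\Vert$, and since $T$ has order $2s$ (not $0$) this does not reduce to $(Tu,u)=\Vert u\Vert_{H_h^s}^2$. The fix is small and fits your framework: either split $A_sCTu=\sum_j(A_sCE_j)(E_ju)$, note that each $A_sCE_j\in\mathrm{Op}(S^0_{1,0})$ is uniformly $L^2$-bounded, and conclude $\Vert A_sCTu\Vert\lesssim\sum_j\Vert E_ju\Vert$, whence $\Vert A_su\Vert\le\Vert A_sCTu\Vert+\Vert A_sK'u\Vert$ gives the estimate; or compare $A_s^*A_s$ with $CT$ at the symbol level and apply the semiclassical G\aa rding inequality. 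With that repair, and your bootstrap to absorb the $\mathcal{O}(h^\infty)$ remainders (which are controlled by $\Vert u\Vert_{H_h^s}$ itself, so can be swallowed for small $h$), the argument is complete and consistent with what the paper asserts.
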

\begin{remark}\label{al3}
\rm From the first definition we see that Proposition \ref{al1} remains
valid if we replace ${\bf R}^n$ by a compact $n$-dimensional 
manifold $X$.
\end{remark}

\par Of course, $H_h^s(X)$ coincides with the standard Sobolev space
$H^s(X)$ and the norms are equivalent for each fixed value of $h$, but
not uniformly with respect to $h$. The following variant of
Proposition \ref{al1} will probably be useful when studying the high
energy limit (that we hope to treat in a future paper).

\begin{prop}\label{al4}
Let $s>n/2$. Then there exists a constant $C=C_s>0$ such that 
\ekv{al.6}
{
\Vert uv\Vert_{H_h^s}\le C\Vert u\Vert_{H^s}\Vert v\Vert_{H_h^s},\
\forall u\in H^s({\bf R}^n),\, v\in H_h^s({\bf R}^n).
}
The result remains valid if we replace ${\bf R}^n$ by $X$.
\end{prop}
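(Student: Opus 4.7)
The plan is to split $u$ into semiclassically low- and high-frequency pieces at the scale $h^{-1}$ and to estimate each separately, using Peetre's inequality on the low piece and Proposition \ref{al1} (the algebra estimate with loss $h^{-n/2}$) on the high piece. I first treat $X={\bf R}^n$. Fix $\chi\in C_0^\infty({\bf R}^n)$ with $\chi=1$ on $\{|\xi|\le 1\}$ and $\mathrm{supp}\,\chi\subset\{|\xi|\le 2\}$, and set $u_L:=\chi(hD)u$, $u_H:=u-u_L$.

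For $u_Lv$, the Fourier transform $\widehat{u_L}$ is supported in $\{|\zeta|\le 2h^{-1}\}$, so in the convolution
\[
\widehat{u_Lv}(\xi)=(2\pi)^{-n}\int\widehat{u_L}(\xi-\eta)\widehat{v}(\eta)\,d\eta
\]
the variable $\xi-\eta$ satisfies $\langle h(\xi-\eta)\rangle\le\sqrt{5}$. Peetre's inequality then gives $\langle h\xi\rangle^s\le C\langle h\eta\rangle^s\langle h(\xi-\eta)\rangle^s\le C\langle h\eta\rangle^s$ on the integration region, and Young's inequality ($L^1*L^2\to L^2$) yields $\|u_Lv\|_{H_h^s}\le C\|\widehat{u_L}\|_{L^1}\|v\|_{H_h^s}$. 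Since $|\widehat{u_L}|\le|\widehat u|$ and, by Cauchy--Schwarz, $\|\widehat u\|_{L^1}\le(\int\langle\zeta\rangle^{-2s}d\zeta)^{1/2}\|u\|_{H^s}\le C\|u\|_{H^s}$ (the integral is finite because $s>n/2$), we obtain $\|u_Lv\|_{H_h^s}\le C\|u\|_{H^s}\|v\|_{H_h^s}$.

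For $u_Hv$, note that on $\mathrm{supp}\,\widehat{u_H}\subset\{|\xi|\ge h^{-1}\}$ one has $h^2|\xi|^2\ge 1$, so $\langle h\xi\rangle\le\sqrt{2}\,h|\xi|\le\sqrt{2}\,h\langle\xi\rangle$, whence $\|u_H\|_{H_h^s}\le Ch^s\|u\|_{H^s}$. Applying Proposition \ref{al1} to $u_H,v\in H_h^s({\bf R}^n)$,
\[
\|u_Hv\|_{H_h^s}\le Ch^{-n/2}\|u_H\|_{H_h^s}\|v\|_{H_h^s}\le Ch^{s-n/2}\|u\|_{H^s}\|v\|_{H_h^s}\le C\|u\|_{H^s}\|v\|_{H_h^s},
\]
the last inequality because $s>n/2$ and $h\in\,]0,h_0]$ with $h_0\le 1$. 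Summing the two pieces by the triangle inequality yields \no{al.6} on ${\bf R}^n$. The manifold statement then follows by a partition of unity argument from the definition \no{al.4}, exactly as in Remark \ref{al3} for Proposition \ref{al1}: the only new terms are commutators with smooth $h$-independent cutoffs, which are harmless.

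The main obstacle is choosing the splitting level. The value $|\xi|\sim h^{-1}$ is forced by a balance of two effects: the gain $h^s$ from restricting $\widehat{u_H}$ to frequencies $\ge h^{-1}$ exactly beats the $h^{-n/2}$ loss of Proposition \ref{al1} precisely under the hypothesis $s>n/2$, while the low-frequency restriction of $u_L$ is what collapses $\langle h(\xi-\eta)\rangle^s$ to $O(1)$ in Peetre's inequality, giving an estimate whose $u$-side carries the non-semiclassical norm $\|u\|_{H^s}$ rather than $\|u\|_{H_h^s}$.
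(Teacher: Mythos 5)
Your proof is correct and follows essentially the same route as the paper: the same frequency splitting $u=\chi(hD)u+(1-\chi(hD))u$, the same $L^1$-convolution bound on the low piece using $\langle h\xi\rangle/\langle h\eta\rangle=\mathcal{O}(1)$ on the support of $\chi(h(\xi-\eta))$ together with $\|\widehat u\|_{L^1}\le C\|u\|_{H^s}$ (requiring $s>n/2$), and the same bound $\|u_H\|_{H_h^s}\le Ch^s\|u\|_{H^s}$ on the high piece combined with Proposition \ref{al1}. The only differences are cosmetic (you name Peetre and Young explicitly and spell out the Cauchy--Schwarz step), so there is nothing substantive to compare.
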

\begin{proof}
The adaptation to the case of a compact manifold is immediate by
working in local coordinates, so it is enough to prove (\ref{al.6}) in
the ${\bf R}^n$-case. 

\par Let $\chi \in C_0^\infty ({\bf R}^n)$ be equal to one in a
neighborhood of $0$. Write $u=u_1+u_2$ with $u_1=\chi (hD)u$,
$u_2=(1-\chi (hD))u$. Then, with hats indicating Fourier transforms, we
have 
$$
\langle h\xi \rangle^s\widehat{u_1v}(\xi )=\frac{1}{(2\pi )^n}
\int \frac{\langle h\xi \rangle^s}{\langle h\eta \rangle^s}
(\chi (h(\xi -\eta ))\widehat{u}(\xi -\eta ))\langle h\eta
\rangle^s\widehat{v}(\eta )d\eta .
$$
Here $\langle h\xi \rangle/\langle h\eta  \rangle={\cal O}(1)$ on the
support of $(\xi ,\eta )\mapsto \chi (h(\xi -\eta ))$, so 
$$
\Vert u_1v\Vert_{H_h^s}\le {\cal O}(1)\Vert
\widehat{u}\Vert_{L^1}\Vert v\Vert_{H_h^s}\le {\cal O}(1)\Vert
u\Vert_{H^s}\Vert v\Vert_{H_h^s},
$$
where we also used that $s>n/2$ in the last estimate.

\par On the other hand, $\langle h\xi \rangle^s\le C h^s\langle
\xi \rangle ^s$ when $1-\chi (h\xi )\ne 0$, so $\Vert
u_2\Vert_{H_h^s}\le Ch^s\Vert u\Vert_{H^s}$. By Proposition \ref{al1},
we get 
$$
\Vert u_2v\Vert_{H_h^s}\le Ch^{-\frac{n}{2}}\Vert
u_2\Vert_{H_h^s}\Vert v\Vert_{H_h^s}\le
\widetilde{C}h^{s-\frac{n}{2}}\Vert u\Vert_{H^s}\Vert
v\Vert_{H_h^s}\le \widetilde{C}\Vert u\Vert_{H^s}\Vert
v\Vert_{H_h^s}, 
$$
when $h\le 1$.
\end{proof}

\section{$H^s$-perturbations and eigenfunctions}\label{hs}
\setcounter{equation}{0}

\par This section gives a very straight forward adaptation of the
corresponding section in \cite{Sj1}. Let $S^m(T^*X)=S^m_{1,0}(T^*X)$,
$S^m(U\times {\bf R}^n)=S^m_{1,0}(U\times {\bf R}^n)$ denote the
classical H\"ormander symbol spaces, where $U\subset {\bf R}^n$ is
open. See for instance \cite{GrSj} and further references given
there. As in \cite{Ha, HaSj}, we can find $\widetilde{p}\in S^m(T^*X)$
which is equal to $p$ outside any given fixed neighborhood of
$p^{-1}(\overline{\Omega })$ such that $\widetilde{p}-z$ is
non-vanishing, for any $z\in \overline{\Omega }$. Let
$\widetilde{P}=P+\mathrm{Op}_h(\widetilde{p}-p)$, where
$\mathrm{Op}_h(\widetilde{p}-p)$ denotes any reasonable quantization
of $(\widetilde{p}-p)(x,h\xi )$. (See for instance the appendix in 
\cite{Sj1}.) Then $\widetilde{P}-z:H^m_h(X)\to H^0_h(X)$ has a
uniformly bounded inverse for $z\in \overline{\Omega }$ and $h>0$
small enough. As in \cite{HaSj, Sj1}, we see that the eigenvalues of
$P$ in $\Omega $, counted with their algebraic multiplicity, coincide
with the zeros of the function $z\mapsto \det
((\widetilde{P}-z)^{-1}(P-z))=\det
(1-(\widetilde{P}-z)^{-1}(\widetilde{P}-P))$.

\par Fix $s>n/2$ and consider the perturbed operator
\ekv{hs.1}{
P_\delta =P+\delta (h^{\frac{n}{2}}q_1+q_2)=P+\delta
(Q_1+Q_2)=P+\delta Q,
}
where $q_j\in H^s(X)$, 
\ekv{hs.2}{
\Vert q_1\Vert_{H^s_h}\le 1,\ \Vert q_2\Vert_{H^s}\le 1,\ 0\le \delta
\ll 1.
}
According to Propositions \ref{al1}, \ref{al4}, $Q={\cal
  O}(1):H_h^s \to H_h^s$ and hence by duality and interpolation,
\ekv{hs.3}
{
Q={\cal O}(1):H_h^\sigma \to H_h^\sigma ,\ -s\le \sigma \le s.
} 

\par As in \cite{Sj1}, the spectrum of $P_\delta $ in $\Omega $
 is discrete and coincides with the set of zeros of 
\ekv{hs.4}
{
\det ((\widetilde{P}_\delta -z)^{-1}(P_\delta -z))=\det (1-
(\widetilde{P}_\delta -z)^{-1}(\widetilde{P}-P)),}
where $\widetilde{P}_\delta :=P_\delta +\widetilde{P}-P$.
Here $(\widetilde{P}-z)^{-1}={\cal O}(1):H_h^\sigma \to H_h^\sigma $
for $\sigma $ in the same range and as in \cite{Sj1} we get the same
conclusion for $(\widetilde{P}_\delta -z)^{-1}$.  

\par
Put 
\ekv{hs.5}
{
P_{\delta ,z}:=(\widetilde{P}_\delta -z)^{-1}(P_\delta -z)=
1-(\widetilde{P}_\delta -z)^{-1}(\widetilde{P}-P)=:1-K_{\delta ,z},
}
\ekv{hs.6}
{
S_{\delta ,z}:=P_{\delta ,z}^*P_{\delta ,z}=1-(K_{\delta ,z}+K_{\delta ,z}^*-K_{\delta ,z}^*K_{\delta ,z})=:1-L_{\delta ,z}.
}
As in \cite{Sj1} we get 
\ekv{hs.7}
{K_{\delta ,z},
L_{\delta ,z}={\cal O}(1):H_h^{-s}\to H_h^s.
}
For $0\le \alpha \le 1/2$, let $\pi _\alpha =1_{[0,\alpha ]}(S_{\delta
,z})$. Then as in \cite{Sj1}, we get
\ekv{hs.8}
{
\pi _\alpha ={\cal O}(1): H_h^{-s}\to H_h^s.
}

\par We also have the corresponding result for $P_\delta -z$. Let 
\ekv{hs.9}
{
S_\delta =(P_\delta -z)^*(P_\delta -z)
}
be defined as the Friedrichs extension from $C^\infty (X)$ with
quadratic form domain $H_h^m(X)$. For $0\le \alpha \le {\cal O}(1)$,
we now put $\pi _\alpha =1_{[0,\alpha ](S_\delta )}$. Then as in
\cite{Sj1}, we see that this new spectral projection also fulfils 
(\ref{hs.9}), for $0\le \alpha \ll 1$.

\section{Some functional and pseudodifferential cal\-culus}\label{fu}
\setcounter{equation}{0}

In this section we derive some results analogous to those of Section 4
in \cite{HaSj}. There we worked on ${\bf R}^n$ and by a simple
dilation and change of the semi-classical parameter from $h$ to
$h/\alpha $ we could reduce ourselves to a situation of more standard
$h/\alpha $-pseudodifferential calculus. On a manifold, this can
probably be done also, but appeared to us as quite heavy, so here we
take another route and develop directly a slightly exotic
pseudodifferential calculus, then use it to study resolvents and
functions of certain self-adjoint pseudodifferential operators.

\par Let $P$ be of the form (\ref{int.1}) and let $p$ in (\ref{int.6})
be the corresponding semi-classical principal symbol. Assume classical
ellipticity as in (\ref{int.3}) and let $z\in {\bf C}$ be fixed
throughout this section. 
Let 
\ekv{fu.1}
{
S=(P-z)^*(P-z),
}
that we realize as a self-adjoint operator in the sense of Friedrichs 
extensions. Later on we will also consider a different choice of $S$, 
namely 
\ekv{fu.1.5}{
S=P_z^*P_z,\hbox{ where }P_z=(\widetilde{P}-z)^{-1}(P-z)} and
$\widetilde{P}$ is defined prior to (\ref{hs.1}). The main goal is to
make a trace class
study of $\chi (\frac{1}{\alpha }S)$ when $0<h\le \alpha \ll 1$, $\chi
\in C_0^\infty ({\bf R})$. With the second choice of $S$, we shall
also study $\ln \det (S+\alpha \chi (\frac{1}{\alpha }S))$, when $\chi
\ge 0$, $\chi (0)>0$. The main step will be to get enough information
about the resolvent $(w-\frac{1}{\alpha }S)^{-1}$ for $w={\cal O}(1)$,
$\Im w\ne 0$ and then apply the Cauchy-Riemann-Green-Stokes formula
\ekv{fu.2}
{
\chi (\frac{1}{\alpha }S)=-\frac{1}{\pi }\int \frac{\partial
  \widetilde{\chi }(w)}{\partial \overline{w}}(w-\frac{1}{\alpha
}S)^{-1}L(dw ),
}
where $\widetilde{\chi }\in C_0^\infty ({\bf C})$ is an almost
holomorphic extension of $\chi $, so that 
\ekv{fu.3}
{
\frac{\partial \widetilde{\chi }}{\partial \overline{w}}={\cal O}(
|\Im w|^\infty ).
}
Thanks to (\ref{fu.3}) we can work in symbol classes with some
temparate but otherwise unspecified growth in $1/|\Im w|$.

\par Let 
\ekv{fu.4}
{
s=|p-z|^2
}
be the semiclassical principal symbol of $S$ in (\ref{fu.1}). A basic
weight function in our calculus will be 
\ekv{fu.5}
{
\Lambda :=\left(\frac{\alpha +s}{1+s} \right)^{\frac{1}{2}},
}
satisfying $\sqrt{\alpha }\le \Lambda \le 1$.

\par As a preparation and motivation for the calculus, we first
consider symbol properties of $1+\frac{s}{\alpha }$ and its powers.
\begin{prop}\label{fu1}
For every choice of local coordinates $x$ on $X$, let $(x,\xi )$
denote the corresponding canonical coordinates on $T^*X$. Then for all 
$\ell\in {\bf R}$, $\widetilde{\alpha },\beta \in {\bf N}^n$, we have 
uniformly in $\xi $ and locally uniformly in $x$:
\ekv{fu.6}
{
\partial _x^{\widetilde{\alpha }}\partial _\xi^\beta
(1+\frac{s}{\alpha })^\ell ={\cal O}(1) (1+\frac{s}{\alpha })^\ell
\Lambda ^{-|\widetilde{\alpha }|-|\beta |}\langle \xi \rangle
^{-|\beta |}.
}
\end{prop}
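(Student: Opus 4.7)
The plan is to reduce the estimate on $(1+s/\alpha)^\ell$ via the chain rule (Faà di Bruno) to symbol-type bounds on $s$ itself, and to derive those bounds on $s$ from the classical ellipticity of $p$ together with Leibniz applied to $s=(p-z)\overline{(p-z)}$.

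As a preliminary, I would record two consequences of the polynomial structure of $p$ in $\xi$ and the ellipticity assumption (\ref{int.3}): first the standard bound
\[
|\partial_x^{\widetilde{\alpha}}\partial_\xi^\beta(p-z)|\le C\langle\xi\rangle^{m-|\beta|},
\]
locally uniformly in $x$; and second, the two-sided equivalence $c\langle\xi\rangle^{2m}\le 1+s\le C\langle\xi\rangle^{2m}$, in particular $\langle\xi\rangle^m\le C\sqrt{1+s}$. Then, expanding $s=(p-z)\overline{(p-z)}$ by Leibniz, $\partial_x^{\widetilde{\alpha}}\partial_\xi^\beta s$ splits into \emph{unmixed} terms, where one factor of $(p-z)$ is left undifferentiated, bounded by $\sqrt{s}\,\langle\xi\rangle^{m-|\beta|}$, and \emph{mixed} terms (occurring only for $|\gamma|:=|\widetilde{\alpha}|+|\beta|\ge 2$), bounded by $\langle\xi\rangle^{2m-|\beta|}$. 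Using $\langle\xi\rangle^m\le C\sqrt{1+s}$, the elementary $\sqrt{s(1+s)}\le 1+s$, and $1+s\le (1+s)^{|\gamma|/2}(\alpha+s)^{1-|\gamma|/2}$ for $|\gamma|\ge 2$ (a consequence of $\alpha+s\le 1+s$), both types of terms fit inside
\[
|\partial_x^{\widetilde{\alpha}}\partial_\xi^\beta s|\le C(1+s)^{|\gamma|/2}(\alpha+s)^{1-|\gamma|/2}\langle\xi\rangle^{-|\beta|},\qquad |\gamma|\ge 1.
\]
Setting $a:=1+s/\alpha$ and dividing by $\alpha$, one checks that this is exactly $|\partial_x^{\widetilde{\alpha}}\partial_\xi^\beta (s/\alpha)|\le C\, a\,\Lambda^{-|\gamma|}\langle\xi\rangle^{-|\beta|}$.

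To pass from $s/\alpha$ to $a^\ell$ I would apply Faà di Bruno to $a^\ell=f(s/\alpha)$ with $f(t)=(1+t)^\ell$, writing $\partial_x^{\widetilde{\alpha}}\partial_\xi^\beta a^\ell$ as a finite sum of terms of the form $a^{\ell-k}\prod_{j=1}^k \partial^{\gamma_j}(s/\alpha)$, where $\gamma_1+\cdots+\gamma_k=\gamma$ with each $|\gamma_j|\ge 1$. Each factor contributes $O(a\,\Lambda^{-|\gamma_j|}\langle\xi\rangle^{-|\beta_j|})$ by the previous step, so each product is of order $a^k\Lambda^{-|\gamma|}\langle\xi\rangle^{-|\beta|}$, and combined with the prefactor $a^{\ell-k}$ this yields the asserted bound $O(a^\ell\Lambda^{-|\gamma|}\langle\xi\rangle^{-|\beta|})$.

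The main obstacle is the bookkeeping in the middle step: one must verify that both the mixed contribution of size $\langle\xi\rangle^{2m}$ and the unmixed contribution of size $\sqrt{s}\,\langle\xi\rangle^m$ fit simultaneously inside the delicate weight $(1+s)^{|\gamma|/2}(\alpha+s)^{1-|\gamma|/2}$ encoding $\Lambda$. This rests entirely on the elliptic equivalence $\langle\xi\rangle^{2m}\sim 1+s$ together with the trivial $\alpha+s\le 1+s$; once these two facts are in place the chain-rule step and the Faà di Bruno combinatorics are routine.
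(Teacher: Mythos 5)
Your argument is correct, and its overall shape matches the paper's: in both cases the final step is the same Fa\`a di Bruno reduction to derivative bounds on $1+s/\alpha$, and in both cases the heart of the matter is showing $\partial^\gamma s$ picks up a factor $\Lambda^{-|\gamma|}$. The tactical route to those derivative bounds, however, differs, and the difference is worth pointing out.

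The paper splits $T^*X$ into a region $|\xi|\gg 1$, where $(1+s/\alpha)^\ell$ is simply an elliptic symbol in $\alpha^{-\ell}S^{2\ell m}_{1,0}$ and $\Lambda\asymp 1$, and a region $|\xi|={\cal O}(1)$, where all $\langle\xi\rangle$-weights are harmless and one only needs $\alpha$-dependence. In the bounded region the paper invokes the abstract inequality $\nabla s={\cal O}(\sqrt{s})$, valid for \emph{any} smooth nonnegative function with locally bounded Hessian, for the first derivative, and the trivial $\partial^\gamma s={\cal O}(1)$ for $|\gamma|\ge 2$; the inequality $\frac{1}{\alpha}\le(1+\frac{s}{\alpha})\Lambda^{-2}$ then absorbs the higher-order terms. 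You instead work globally: you expand $s=(p-z)\overline{(p-z)}$ by Leibniz, separate the unmixed terms (one undifferentiated factor, giving the $\sqrt{s}$ gain) from the mixed terms (both factors hit, occurring only for $|\gamma|\ge 2$), and then fold both the $\sqrt{s}$ gain and the $\langle\xi\rangle$-decay into the weight $(1+s)^{|\gamma|/2}(\alpha+s)^{1-|\gamma|/2}\langle\xi\rangle^{-|\beta|}$ using the elliptic equivalence $1+s\asymp\langle\xi\rangle^{2m}$. In effect you re-derive $\nabla s={\cal O}(\sqrt{s})$ from the quadratic structure of $s$ rather than quoting it, and you obtain a single estimate valid for all $\xi$ rather than patching two regimes. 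The two buy slightly different things: the paper's route generalizes immediately to any nonnegative $s$ with the right symbol bounds and isolates the $\xi$-growth issue as a standard ellipticity remark; yours avoids the case split and makes the $\alpha$- and $\xi$-dependence visible in one line. One trivial bookkeeping remark: for $|\gamma|=1$ the inequality you need is $\sqrt{s(1+s)}\le\sqrt{(1+s)(\alpha+s)}$, i.e.\ $s\le\alpha+s$, which is immediate but not among the three tools you listed; the list as written only covers $|\gamma|\ge 2$.
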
 
\begin{proof}
In the region $|\xi |\gg 1$ we see that $(1+\frac{s}{\alpha })^\ell$
is an elliptic element of the H\"ormander symbol class 
$$
\alpha ^{-\ell}S^{2\ell m}_{1,0}=:\alpha ^{-\ell}S(\langle \xi \rangle
^{2\ell m}),
$$
and $\Lambda \asymp 1$ there, so (\ref{fu.6}) holds. In the region
$|\xi |={\cal O}(1)$, we start with the case $\ell =1$. Since $s\ge
0$, we have $\nabla s={\cal O}(s^{\frac{1}{2}})$, so 
$$
|\nabla (1+\frac{s}{\alpha })|={\cal O}(\frac{s^{\frac{1}{2}}}{\alpha
})\le 
{\cal O}(1)(1+\frac{s}{\alpha })(\alpha +s)^{-\frac{1}{2}}={\cal
  O}(1)(1+\frac{s}{\alpha })\Lambda ^{-1}.
$$
For $k\ge 2$, we have 
$$
|\nabla ^k(1+\frac{s}{\alpha })|={\cal O}(\frac{1}{\alpha })={\cal
  O}(1)(1+\frac{s}{\alpha })\Lambda ^{-2}\le {\cal
  O}(1)(1+\frac{s}{\alpha })\Lambda ^{-k},
$$
and we get (\ref{fu.6}) when $\ell =1$.

\par If $\ell\in {\bf R}$, then $\partial _x^{\widetilde{\alpha
  }}\partial _\xi ^\beta (1+\frac{s}{\alpha })^\ell$ is a finite
linear combination of terms 
$$
(1+\frac{s}{\alpha })^{\ell -k}(\partial _x^{\widetilde{\alpha
  }_1}\partial _\xi^{\beta _1}(1+\frac{s}{\alpha }))\cdots
(\partial _x^{\widetilde{\alpha
  }_k}\partial _\xi^{\beta _k}(1+\frac{s}{\alpha })),  
$$
with $\widetilde{\alpha }=\widetilde{\alpha }_1+...+\widetilde{\alpha
}_k$, $\beta =\beta _1+...+\beta _k$, and we get (\ref{fu.6}) in general.
\end{proof}

\par We next notice that when $w={\cal O}(1)$,
\ekv{fu.7}
{
\frac{|\Im w|}{C}(1+\frac{s}{\alpha })\le |w-\frac{s}{\alpha }|\le
C(1+\frac{s}{\alpha }).
}
In fact, the second inequality is obvious, and so is the first one,
when $\frac{s}{\alpha }\gg 1$. When $\frac{s}{\alpha }\le {\cal
  O}(1)$, it follows from the fact that 
$$
1+\frac{s}{\alpha }={\cal O}(1),\quad |w-\frac{s}{\alpha }|\ge |\Im w|.
$$

\par From (\ref{fu.6}), (\ref{fu.7}), we get
\ekv{fu.8}
{
|\partial _x^{\widetilde{\alpha }}\partial _\xi ^\beta
(w-\frac{s}{\alpha })|\le {\cal O}(1)(w-\frac{s}{\alpha })\Lambda
^{-|\widetilde{\alpha }|-|\beta |}\langle \xi \rangle ^{-|\beta |}|\Im
w|^{-1}.
}
When passing to $(w-\frac{s}{\alpha })^\ell$ and applying the proof of
Proposition \ref{fu1}, we loose more powers of $|\Im w|$ that can
still be counted precisely, but we refrain from doing so and simply
state the following result:
\begin{prop}\label{fu2}
For all $\ell\in {\bf R}$, $\widetilde{\alpha },\beta \in {\bf N}^n$,
there exists $J\in {\bf N}$, such that 
\ekv{fu.9}
{
\partial _x^{\widetilde{\alpha }}\partial _\xi ^\beta
(w-\frac{s}{\alpha })^\ell ={\cal O}(1)(1+\frac{s}{\alpha })^\ell 
\Lambda ^{-|\widetilde{\alpha }|-|\beta |}\langle \xi \rangle^{-|\beta
  |}|\Im w|^{-J},
}
uniformly in $\xi $ and locally uniformly in $x$.
\end{prop}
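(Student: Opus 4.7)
The plan is to mimic the argument used in the proof of Proposition \ref{fu1}, but applied to the function $F := w - s/\alpha$ in place of $1 + s/\alpha$. The key input is the already established pointwise bound (\ref{fu.8}), which controls first-order derivatives of $F$, together with the comparison (\ref{fu.7}) between $|w - s/\alpha|$ and $1 + s/\alpha$. Since the only new difficulty compared to Proposition \ref{fu1} is the appearance of negative powers of $|\Im w|$ through (\ref{fu.7}) and (\ref{fu.8}), and since the estimate \eqref{fu.9} allows any finite $J$, there is no need to track these powers optimally.

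The first step is to expand $\partial_x^{\widetilde{\alpha}} \partial_\xi^{\beta} F^\ell$ via the multivariate Faà di Bruno formula as a finite linear combination of terms of the form
$$
F^{\ell - k}\,\prod_{j=1}^{k} \partial_x^{\widetilde{\alpha}_j}\partial_\xi^{\beta_j} F,
$$
where $1 \le k \le |\widetilde{\alpha}| + |\beta|$, $\sum_j \widetilde{\alpha}_j = \widetilde{\alpha}$, $\sum_j \beta_j = \beta$ and $|\widetilde{\alpha}_j|+|\beta_j|\ge 1$ for each $j$. Note that $\partial_x^{\widetilde{\alpha}_j}\partial_\xi^{\beta_j} F = -\partial_x^{\widetilde{\alpha}_j}\partial_\xi^{\beta_j}(s/\alpha)$, so (\ref{fu.8}) applies directly to each factor.

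The second step is to multiply the $k$ factor-wise estimates from (\ref{fu.8}) and combine them with $F^{\ell-k}$. This yields, for each term in the Faà di Bruno expansion, the bound
$$
\bigl|F^{\ell - k}\bigr|\cdot \prod_{j=1}^{k} \mathcal{O}(1)\,|F|\,\Lambda^{-|\widetilde{\alpha}_j|-|\beta_j|}\,\langle\xi\rangle^{-|\beta_j|}\,|\Im w|^{-1}
= \mathcal{O}(1)\,|F|^{\ell}\,\Lambda^{-|\widetilde{\alpha}|-|\beta|}\,\langle\xi\rangle^{-|\beta|}\,|\Im w|^{-k},
$$
where the powers of $\Lambda$, $\langle\xi\rangle$, and $|\Im w|$ add correctly because the multi-indices add. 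Finally, applying (\ref{fu.7}) (once if $\ell\ge 0$, and in the inverse direction if $\ell < 0$, costing an extra $|\Im w|^{-|\ell|}$), we convert $|F|^{\ell}$ into $(1+s/\alpha)^{\ell}$ at the cost of at most $|\Im w|^{-\max(0,-\ell)}$. Summing the finitely many Faà di Bruno terms and taking $J := |\widetilde{\alpha}|+|\beta|+\max(0,-\ell)$ (or any larger integer) yields \eqref{fu.9}.

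There is no real obstacle here; the argument is entirely parallel to that of Proposition \ref{fu1}, the only bookkeeping issue being the accumulation of negative powers of $|\Im w|$. The mildly delicate point is simply to observe that (\ref{fu.8}) already has $|F|$ on the right-hand side, which is exactly what makes the $k$ first-order factors combine cleanly with $F^{\ell-k}$ to reconstruct $F^{\ell}$ without losing additional powers of $1+s/\alpha$.
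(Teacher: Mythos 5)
Your proof is correct and follows essentially the same route the paper intends: the paper disposes of Proposition~\ref{fu2} in one sentence by saying ``apply the proof of Proposition~\ref{fu1}'' together with (\ref{fu.7}), (\ref{fu.8}), and the proof of Proposition~\ref{fu1} for general~$\ell$ is exactly the Fa\`a di Bruno--type expansion of $(\cdot)^\ell$ into terms $(\cdot)^{\ell-k}\prod_j\partial^{\widetilde{\alpha}_j,\beta_j}(\cdot)$ that you use. You simply carry out the bookkeeping of $|\Im w|$-powers that the paper declines to track, landing on $J=|\widetilde{\alpha}|+|\beta|+\max(0,-\ell)$ (rounded up to an integer), which is a valid choice.
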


\par We now define our new symbol spaces.
\begin{dref}\label{fu3}
Let $\widetilde{m}(x,\xi )$ be a weight function of the form 
$\widetilde{m}(x,\xi )=\langle \xi \rangle ^k\Lambda^\ell $. We say
that the family $a=a_w\in C^\infty (T^*X)$, $w\in D(0,C)$, belongs to $S_\Lambda (\widetilde{m})$ if
for all $\widetilde{\alpha },\beta \in {\bf N}^n$ there exists $J\in
{\bf N}$ such that 
\ekv{fu.10}
{
\partial _x^{\widetilde{\alpha }}\partial _\xi ^\beta a=
{\cal O}(1)\widetilde{m}(x,\xi )\Lambda ^{-|\widetilde{\alpha
  }|-|\beta |}\langle \xi \rangle^{-|\beta |}|\Im w|^{-J}.
} 
\end{dref}

Here, as in Proposition \ref{fu2}, it is understood that that the
estimate is expressed in canonical coordinates and is locally uniform
in $x$ and uniform in $\xi $. Notice that the set of estimates
(\ref{fu.10}) is invariant under changes of local coordinates in $X$.

\par Let $U\subset X$ be a coordinate neighborhood that we shall view
as a subset of ${\bf R}^n$ in the natural way. Let $a\in S_\Lambda
(T^*U,\widetilde{m})$ be a symbol as in Definition \ref{fu3} so that
(\ref{fu.10}) holds uniformly in $\xi $ and locally uniformly in
$x$. For fixed values of $\alpha $, $w$ the symbol $a$ belongs
to $S^k_{1,0}(T^*U)$, so the classical $h$-quantization
\ekv{fu.11}
{
Au=\mathrm{Op}_h(a)u(x)=\frac{1}{(2\pi h)^n}\iint
e^{\frac{i}{h}(x-y)\cdot \eta }a(x,\eta ;h)u(y)dyd\eta 
}
is a well-defined operator $C_0^\infty (U)\to C^\infty (U)$, ${\cal
  E}'(U)\to {\cal D}'(U)$. In order to develop our rudimentary
calculus on $X$ we first establish a pseudolocal property for the
distribution kernel $K_A(x,y)$:
\begin{prop}\label{fu4}
For all $\widetilde{\alpha },\beta \in {\bf N}^n$, $N\in {\bf N}$,
there exists $M\in {\bf N}$ such that 
\ekv{fu.12}
{
\partial _x^{\widetilde{\alpha }}\partial _y^\beta K_A(x,y)={\cal
  O}(h^N|\Im w|^{-M}),
}
locally uniformly on $U\times U\setminus \mathrm{diag}(U\times U)$.
\end{prop}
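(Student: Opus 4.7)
The proof is a standard non-stationary phase / integration-by-parts argument in the oscillatory integral defining $K_A$, but carried out inside the exotic class $S_\Lambda(\widetilde m)$. The crucial bookkeeping is that each $\partial_\eta$ applied to $a$ costs a factor $\Lambda^{-1}\langle\eta\rangle^{-1}$, and although $\Lambda^{-1}$ can be as large as $\alpha^{-1/2}$, the hypothesis $h\le\alpha$ gives the a priori bound $\Lambda^{-1}\le h^{-1/2}$. This is what will allow us to beat the $h^{-n}$ in front of the integral and produce arbitrarily many gains in $h$.

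First I would start from the oscillatory integral
\[
K_A(x,y)=\frac{1}{(2\pi h)^n}\int e^{\frac{i}{h}(x-y)\cdot\eta}a(x,\eta;h)\,d\eta,
\]
fix a compact set $K\subset U\times U\setminus \mathrm{diag}$, on which $|x-y|\ge c>0$, and, for any $N\in\mathbf{N}$, integrate by parts $N$ times using the transpose of the operator
\[
L=\frac{-h}{i|x-y|^2}(x-y)\cdot\partial_\eta,\qquad Le^{\frac{i}{h}(x-y)\cdot\eta}=e^{\frac{i}{h}(x-y)\cdot\eta}.
\]
Each application of $L^t$ produces a factor $h/|x-y|^2$ and a first-order $\eta$-derivative falling on $a$. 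Iterating and invoking Definition~\ref{fu3} I would obtain, for a suitable $J_N\in\mathbf{N}$,
\[
|(L^t)^N a(x,\eta)|\le C_N\!\left(\frac{h}{|x-y|^2}\right)^{\!N}\!\Lambda^{-N}\langle\eta\rangle^{-N}\widetilde m(x,\eta)\,|\Im w|^{-J_N}.
\]
If $N$ is chosen so large that $N-k-n>0$ (where $k$ is the order in $\eta$ of $\widetilde m$), the $\eta$-integral converges absolutely and contributes an $\mathcal O(1)$ factor; absorbing the (bounded) $\Lambda^\ell$-factor coming from $\widetilde m$ into a suitable $h^{-C}$ loss via $\Lambda\ge\sqrt h$, I get
\[
|K_A(x,y)|\le C h^{-n+N}|x-y|^{-2N}\Lambda^{-N}|\Im w|^{-J_N}\le C h^{-n+\frac{N}{2}-C'}|\Im w|^{-J_N}
\]
on $K$. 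Taking $N$ large this yields $|K_A(x,y)|=\mathcal O(h^N|\Im w|^{-M})$ for any prescribed $N$.

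For the derivatives $\partial_x^{\widetilde\alpha}\partial_y^\beta K_A$, I would differentiate under the integral sign: each $\partial_y$ brings down a factor $-i\eta/h$, while each $\partial_x$ either hits the phase (again producing $i\eta/h$) or the symbol (producing $\partial_x a$, which is bounded by Definition~\ref{fu3} with an extra $\Lambda^{-1}$ loss). Thus the new integrand differs from the original only by an extra polynomial factor $\mathcal O(\langle\eta\rangle^{|\widetilde\alpha|+|\beta|}h^{-|\widetilde\alpha|-|\beta|})$ and finitely many extra $\Lambda^{-1}\le h^{-1/2}$ losses. Repeating the previous integration-by-parts step with $N$ even larger so that it again dominates all these fixed losses (this is where we use that $h\le\alpha$ in an essential way, through $\Lambda^{-1}\le h^{-1/2}$), and using $|x-y|\ge c$ on $K$, we obtain
\[
|\partial_x^{\widetilde\alpha}\partial_y^\beta K_A(x,y)|\le C_N\,h^N|\Im w|^{-M},
\]
with $M$ depending on $\widetilde\alpha,\beta,N$. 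The main (minor) obstacle is keeping track of the powers of $h$ and of $|\Im w|$ when the $\partial_\eta$ from $L^t$ land on the monomial $(\eta/h)^\gamma$ rather than on $a$; but these mixed terms are strictly better behaved than the extreme case where all $\partial_\eta$'s land on $a$, so a single worst-case estimate suffices. The estimate is clearly locally uniform in $(x,y)\in U\times U\setminus\mathrm{diag}$ since $c$ and all the bounds above depend only on a lower bound of $|x-y|$ and on the local $S_\Lambda(\widetilde m)$-seminorms of $a$.
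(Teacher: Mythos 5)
Your proof is correct and follows essentially the same approach as the paper. The paper writes the off-diagonal decay as the identity that $(x-y)^\gamma K_A(x,y)$ is the kernel of $\mathrm{Op}_h((-hD_\xi)^\gamma a)$ — this is the same move as your integration by parts against $L$ — and uses the same key bound $h/\Lambda\le h^{1/2}$; for the derivatives it simply notes that $\partial_x^{\widetilde\alpha}\partial_y^\beta K_A$ is again the kernel of a pseudodifferential operator of the same kind (symbol a sum of $(\eta/h)^\gamma\,\partial_x^{\gamma'}a$ terms), which is what you spell out explicitly before re-running the estimate with a larger $N$.
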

\begin{proof}
If $\gamma \in {\bf N}^n$, then $(x-y)^\gamma K_A(x,y)$ is the
distribution kernel of $\mathrm{Op}_h((-hD_\xi )^\gamma a)$ and 
$(-hD_\xi )^\gamma a\in S_\Lambda
\left(\widetilde{m}\left(\frac{h}{\Lambda \langle \xi \rangle}\right)^{|\gamma |}\right)
$ and we notice that $h/\Lambda \le h/\alpha ^{\frac{1}{2}}\le
h^{\frac{1}{2}}$. Thus for any $N\in {\bf N}$, we have 
$$
(x-y)^\gamma K_A(x,y)={\cal O}(h^N|\Im w|^{-M})\hbox{ if }|\gamma |\ge
\gamma (N)
$$ 
is large enough. From this we get (\ref{fu.12}) when
$\widetilde{\alpha }=\beta =0$. Now, $\partial _x^{\widetilde{\alpha
  }}\partial _y^\beta K_A$ can be viewed as the distribution kernel of
a new pseudodifferential operator of the same kind, so we get
(\ref{fu.12}) for all $\widetilde{\alpha },\beta $.
\end{proof}

This means that if $\phi ,\psi \in C_0^\infty (U)$ have disjoint
supports, then for every $N\in {\bf N}$, there exists $M\in {\bf N}$
such that $\phi A\psi :H^{-N}({\bf R}^n)\to H^N({\bf R}^n)$ with norm
${\cal O}(h^N|\Im w|^{-M})$, and this leads to a simple way of
introducing pseudo\-differential operators on $X$: Let $U_1,...,U_s$ be
coordinate neighborhoods that cover $X$. Let $\chi _j\in C_0^\infty
(U_j)$ form a partition of unity and let $\widetilde{\chi }_j\in
C_0^\infty (U_j)$ satisfy $\chi _j\prec \widetilde{\chi }_j$ in the
sense that $\widetilde{\chi }_j$ is equal
to 1 near $\mathrm{supp\,}(\chi _j)$. Let $a=(a_1,...,a_s)$, where
$a_j\in S_\Lambda (\widetilde{m})$. Then we quantize $a$ by the
formula:
\ekv{fu.13}
{
A=\sum_1^s \widetilde{\chi }_j\circ \mathrm{Op}_h(a_j)\circ \chi _j.
}
This is not an invariant quantization procedure but it
will suffice for our purposes. 

We next study the composition to the left with non-exotic
pseudodifferential operators. Let $U$ be a coordinate neighborhood,
viewed as an open set in ${\bf R}^n$, and take $A= \mathrm{Op}_h(a)$, 
$a\in S_{1,0}(m_1)$, $m_1=\langle \xi \rangle^r$,
$B=\mathrm{Op}_h(b)$, $b\in S_\Lambda (m_2)$ with $m_2=\langle \xi
\rangle^k\Lambda ^\ell$ as in Definition \ref{fu3}. We will assume
that $\mathrm{supp\,}(b)\subset K\times {\bf R}^n$, where $K\subset U$
is compact. We are interested in $C=A\circ B$.

\par The symbol $c$ of this composition is given by
\eekv{fu.14}
{
&&c(x,\xi ;h)=e^{-\frac{i}{h}x\cdot \xi }A(b(\cdot ,\xi
)e^{\frac{i}{h}(\cdot )\cdot \xi })(x)}{&&=\frac{1}{(2\pi h)^n}\iint
a(x,\eta )b(y,\xi )e^{\frac{i}{h}(x-y)\cdot (\eta -\xi )}dyd\eta 
}  

\par In the region $|\eta -\xi |\ge \frac{1}{C}\langle \xi \rangle$ we can
make repeated integrations by parts in the $y$-variables and see that
the contribution from this region is a symbol $d(x,\xi ;h)$ satisfying 
\eekv{fu.15}
{
\forall N\in {\bf N}, \widetilde{\alpha }, \beta \in {\bf N}^n,
\exists  M\in {\bf N}, \forall K\Subset U, \exists C>0;}
{|\partial
_x^{\widetilde{\alpha} }\partial _{\xi }^\beta d(x,\xi ;h)|\le
C\frac{h^N\langle \xi \rangle^{-N}}{|\Im w|^M},\ (x,\xi )\in K\times
{\bf R}^n.
}

\par Up to such a term $d$, we may assume that with $\chi \in
C_0^\infty (B(0,\frac{1}{2}))$ equal to 1 near 0, 
\eekv{fu.16}
{c(x,\xi ;h)&\equiv&\frac{1}{(2\pi h)^n}\iint
a(x,\eta )b(y,\xi )\chi (\frac{\eta -\xi }{\langle \xi \rangle})e^{\frac{i}{h}(x-y)\cdot (\eta -\xi )}dyd\eta
}
{&=&\left(\frac{\langle \xi \rangle}{2\pi h}\right)^n
\iint
a(x,\langle \xi \rangle(\eta +\frac{\xi }{\langle \xi \rangle} ))
b(x+y,\xi )\chi (\eta )e^{\frac{-i\langle \xi
    \rangle}{h}y\cdot \eta }dyd\eta .
}
The method of stationary phase gives for every $N\in {\bf N}$:
\ekv{fu.17}
{
c(x,\xi ;h)=\sum_{|\beta |<N}\frac{h^{|\beta |}}{\beta !}\partial _\xi
^\beta a D_x^\beta b +R_N. 
}
Here,
\eekv{fu.18}
{
R_N&=&\left(\frac{h}{\langle \xi \rangle}\right)^N\frac{1}{(N-1)!}\times }
{&&\int_0^1(1-t)^NJ\left( t\frac{h}{\langle \xi \rangle},
(\partial _\eta \cdot D_y)^N(a(x,\langle \xi
\rangle(\eta +\frac{\xi }{\langle \xi \rangle}))b(x+y,\xi )\chi (\eta
))
\right) dt,
}
where 
$$
J(s,u)=\frac{1}{(2\pi s)^n}\iint u(y,\eta )e^{-\frac{i}{s}y\cdot \eta
}dyd\eta ,
$$
and we used the fact that $\partial _sJ(s,u)=J(s,\partial _\eta \cdot
D_y(u))$, $J(s,u)\to u(0)$ when $s\to 0$.
Noting that 
$$
|J(s,u)|\le C\sum_{|\widetilde{\alpha }|+|\beta |\le
  2n+1}\Vert \partial _y^{\widetilde{\alpha }}\partial _\eta ^\beta u\Vert_{L^1},
$$
we see that there exist exponents $N_2,N_3$ independent of $N$, such
that 
$$
|R_N|\le C\left(\frac{h}{\langle \xi \rangle}\right)^N m_1(\xi )
\langle \xi \rangle^{N_2}\alpha ^{N_3-\frac{N}{2}}|\Im w|^{-M(N)}.
$$
Similar estimates hold for the derivatives and we conclude:
\begin{prop}\label{fu5}
Let $A=\mathrm{Op}_h(a)$, $a\in S_{1,0}(m_1)$, $B=\mathrm{Op}_h(b)$,
$b\in S_\Lambda (m_2)$ and assume that $b$ has uniformly compact
support in $x$. Then $A\circ B=\mathrm{Op}_h(c)$, where $c$ belongs to
$S_\Lambda
(m_1m_2)$ and has the asymptotic expansion
$$
c\sim \sum \frac{h^{|\beta |}}{\beta !}\partial _\xi ^\beta a(x,\xi )D_x^\beta
b(x,\xi ),
$$
in the sense that for every $N\in {\bf N}$,
$$
c= \sum_{|\beta |<N} \frac{h^{|\beta |}}{\beta !}\partial _\xi ^\beta a(x,\xi )D_x^\beta
b(x,\xi ) + r_N(x,\xi ;h),
$$
where 
$r_N\in S_\Lambda (\frac{m_1m_2}{(\Lambda \langle \xi \rangle)^N}h^N)$.
\end{prop}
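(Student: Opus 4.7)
The strategy is to make rigorous the stationary phase computation already outlined in (\ref{fu.14})--(\ref{fu.18}) and verify that each error term lies in the correct class $S_\Lambda$. First I would start from the oscillatory integral representation (\ref{fu.14}) of the composition symbol $c=\sigma(A\circ B)$ and split the $\eta$-integration by inserting a cutoff $\chi((\eta-\xi)/\langle\xi\rangle)$, $\chi\in C_0^\infty(B(0,1/2))$ equal to $1$ near $0$. On the complement $|\eta-\xi|\ge \langle\xi\rangle/2$, repeated integration by parts in $y$, using that $a\in S_{1,0}(m_1)$ and $b\in S_\Lambda(m_2)$ has compact $x$-support, produces a contribution $d$ satisfying (\ref{fu.15}), whose decay $\langle\xi\rangle^{-N}$ is far better than the advertised remainder $r_N$ and may therefore be absorbed into it.

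On the localized piece, after the rescaling implicit in (\ref{fu.16}) one has an oscillatory integral with phase $-y\cdot\eta$ and small parameter $h/\langle\xi\rangle$. Taylor expansion of $a(x,\langle\xi\rangle(\eta+\xi/\langle\xi\rangle))$ in $\eta$ around $0$ to order $N-1$, combined with the identity $\partial_s J(s,u)=J(s,\partial_\eta\cdot D_y u)$ and the limit $J(s,u)\to u(0)$ as $s\to 0$, yields directly the expansion (\ref{fu.17}) with the explicit remainder (\ref{fu.18}).

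The quantitative heart of the proof is to check that $r_N\in S_\Lambda(m_1 m_2(h/(\Lambda\langle\xi\rangle))^N)$. Each application of $\partial_{\eta_j}D_{y_j}$ to the amplitude produces either the factor $\langle\xi\rangle(\partial_\xi a)={\cal O}(m_1)$ paired with $D_x b={\cal O}(m_2\Lambda^{-1})$, or a derivative falling on $\chi(\eta)$, which is non-stationary and handled as in the first step. Hence $N$ applications yield an $L^1$-bound ${\cal O}(m_1 m_2\Lambda^{-N}|\Im w|^{-M(N)})$ for the amplitude appearing in $J$ in (\ref{fu.18}), and the prefactor $(h/\langle\xi\rangle)^N$ delivers the claimed pointwise bound on $r_N$. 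Differentiating (\ref{fu.18}) in $x$ and $\xi$ under the integral sign and tracking the extra $\Lambda^{-1}$ and $\Lambda^{-1}\langle\xi\rangle^{-1}$ factors from each additional derivative of $b$ gives the full symbol estimate of Definition \ref{fu3} for $r_N$. Finally, $c\in S_\Lambda(m_1 m_2)$ follows from the expansion, since each main term $h^{|\beta|}\partial_\xi^\beta a\cdot D_x^\beta b$ lies in $S_\Lambda(m_1 m_2(h/(\Lambda\langle\xi\rangle))^{|\beta|})\subset S_\Lambda(m_1 m_2)$, by $h\le\alpha\le\Lambda^2$ and $\langle\xi\rangle\ge 1$.

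The main technical obstacle is the careful bookkeeping of $\Lambda$- and $|\Im w|$-losses: each differentiation of $b$ costs $\Lambda^{-1}$ (or $\Lambda^{-1}\langle\xi\rangle^{-1}$) together with some unknown power of $|\Im w|^{-1}$. Since Definition \ref{fu3} permits an unspecified polynomial growth in $|\Im w|^{-1}$, the delicate part reduces to the $\Lambda$- and $\langle\xi\rangle$-accounting, which proceeds along the same lines as Proposition \ref{fu2}.
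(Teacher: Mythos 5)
Your proposal is correct and follows essentially the same route as the paper: cutoff splitting in $\eta$, non-stationary integration by parts for the far region, the exact Taylor remainder $R_N$ estimated via the $J$-functional bound, and the $\Lambda$/$\langle\xi\rangle$ bookkeeping that each $\partial_\eta$ on $a$ costs nothing (the $\langle\xi\rangle$ from the chain rule cancels the $S_{1,0}$ decay) while each $D_y$ on $b$ costs one $\Lambda^{-1}$. The paper is in fact slightly cruder at the final step, recording only a uniform $\alpha^{N_3-N/2}$ bound (using $\Lambda\ge\sqrt{\alpha}$) rather than the pointwise $\Lambda^{-N}$ you state, but both presentations leave the same routine details (the extra $2n+1$ derivatives from the $J$-estimate, and the $x$-variation of $\Lambda$ under the $y$-integration) to the reader, so the level of rigor matches.
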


\par We next make a parametrix construction for $w-\frac{1}{\alpha
}S$, still with $S$ as in (\ref{fu.1}), and most of the work will take
place in a coordinate neighborhood $U$, viewed as an open set in ${\bf
  R}^n$. The symbol of $w-\frac{1}{\alpha }S$ is of the form
\ekv{fu.19}
{
F=F_0+F_{-1},\quad F_0=w-\frac{1}{\alpha }s,\ F_{-1}=\frac{h}{\alpha }s_{-1}\in
S(\frac{h}{\alpha }\langle \xi \rangle^{2m-1}).
} 
Put 
\ekv{fu.20}
{
E_0=\frac{1}{w-\frac{1}{\alpha }s}\in S_\Lambda (\frac{\alpha
}{\Lambda ^2\langle \xi \rangle^{2m}}).
}
With Proposition \ref{fu5} in mind, we first consider the formal
composition
\eekv{fu.21}
{F\# E_0&\sim& \sum \frac{h^{|\beta |}}{\beta !}(\partial _\xi ^\beta F)
  (D_x^{\beta }E_0)}
{&\sim& 1+\sum_{|\beta |\ge 1}\frac{h^{|\beta |}}{\beta !}(\partial
  _\xi ^\beta F_0) (D_x^\beta E_0)+F_{-1}\# E_0.
}
Here,
$$
F_{-1}\# E_0\in S_\Lambda (\frac{h}{\alpha }\langle \xi
\rangle^{2m-1}\frac{\alpha }{\Lambda ^2\langle \xi
  \rangle^{2m}})=S_\Lambda (\frac{h}{\Lambda ^2\langle \xi \rangle }).
$$
Since $F_0$ also belongs to $S_\Lambda (\frac{1}{\alpha }\Lambda
^2\langle \xi \rangle^{2m})$, we see that for $|\beta |\ge 1$,
$$
h^{|\beta |}(\partial _\xi ^\beta F_0)(D_x^\beta E_0)\in S_\Lambda
(\frac{h^{|\beta |}}{\Lambda ^{2|\beta |}\langle \xi \rangle^{|\beta
  |}})\subset
S_\Lambda (\frac{h}{\Lambda ^2\langle \xi \rangle}),
$$
and this can be improved for $|\beta |\ge 2$, using that
$F\in S_{1,0}(\frac{1}{\alpha }\langle \xi \rangle^{2m})$. Hence,
$$
F\# E_0=1+r_1,\ r_1\in S_\Lambda (\frac{h}{\Lambda ^2\langle \xi \rangle}).
$$

\par Now put $E_1=E_0-r_1/(w-s/\alpha )$. Then by the same estimates
with an extra power of $h\Lambda ^{-2}\langle \xi \rangle^{-1}$, we
get
$$
F\# E_1=1+r_2,\ r_2\in S_{\Lambda }((\frac{h}{\Lambda ^2\langle \xi \rangle})^2),
$$
and iterating the procedure we get
\ekv{fu.22}
{
E_N\equiv \frac{1}{w-\frac{s}{\alpha }}\ \mathrm{mod}\ S_\Lambda
(\frac{\alpha }{\Lambda ^2\langle \xi \rangle ^{2m}}\frac{h}{\Lambda
  ^2\langle \xi \rangle}),
}
such that
\ekv{fu.23}
{
F\# E_N=1+r_N,\ r_N\in S_\Lambda ((\frac{h}{\Lambda ^2\langle \xi 
\rangle})^{N+1}).
}
Actually, in this construction we can work with finite sums instead
of asymptotic ones and then
\ekv{fu.24}
{
E_N\hbox{ is a holomorphic function of }w,\hbox{ for }|\xi |\ge C,
}
where $C$ is independent of $N$.

\par Now we return to the manifold situation and denote by
$E_N^{(j)}$, $r_N^{(j)}$ the corresponding symbols on $T^*U_j$,
constructed above. Denote the operators by the same symbols, and put
on the operator level:
\ekv{fu.25}
{
E_N=\sum_{j=1}^s \widetilde{\chi }_jE_N^{(j)}\chi _j,
}
with $\chi _j$, $\widetilde{\chi _j}$ as in (\ref{fu.13}). Then
\eeekv{fu.26}
{
(w-\frac{1}{\alpha }S)E_{N-1}&=&1-\sum_{j=1}^s \frac{1}{\alpha
}[S,\widetilde{\chi }_j]E_{N-1}^{(j)}\chi _j+\sum_{j=1}^s\widetilde{\chi
}_j
r_N^{(j)}\chi _j
}{&=:&1+R_N^{(1)}+R_N^{(2)}
}
{&=:&1+R_N.}
Proposition \ref{fu4} implies that for every $\widetilde{N}$, there
exists an $\widetilde{M}$ such that the trace class norm of
$R_N^{(1)}$ satisfies
\ekv{fu.27}
{
\Vert R_N^{(1)}\Vert_{\mathrm{tr}}\le {\cal O}(h^{\widetilde{N}}|\Im
w| ^{-\widetilde{M}}).
}

\par As for the trace class norm of $R_N^{(2)}$, we review some easy
facts about such norms for pseudodifferential operators:

\par If $A=a(x,D)$ is a pseudodifferential operator on ${\bf R}^n$, 
either in the
Weyl or in the classical quantization, then $A$ is of trace class and
we have 
$$
\Vert A\Vert_{\mathrm{tr}}\le C\iint \sum_{_{|\beta |\le
    2n+1}}|\partial _{x,\xi }^\beta a | dxd\xi ,
$$ 
provided that the integral is finite. In that case we also know that
$$
\mathrm{tr\,}(A)=\frac{1}{(2\pi )^n}\iint a(x,\xi )dxd\xi .
$$
 See Robert \cite{Ro}, and also
$\cite{DiSj}$ for a sharper statement. If instead we consider an
$h$-pseudodifferential operator $A=a(x,hD)$, then it is unitarily
equivalent to $\widetilde{A}=a(h^{\frac{1}{2}}x,h^{\frac{1}{2}}D_x)$,
so 
$$
\Vert A\Vert_{\mathrm{tr}}\le \frac{C}{h^n}\iint \sum_{_{|\beta |\le
    2n+1}}|(h^{\frac{1}{2}}\partial _{x,\xi })^\beta a | dxd\xi ,
$$
where the factor $h^{-n}$ is the Jacobian, when passing from 
$h^{1/2}x, h^{1/2}\xi $ to $x,\xi $.

\par Now, let $a\in S_\Lambda (m)$ be a symbol on $T^*U$ with
uniformly compact support in $x$. Then for $|\beta |\le 2n+1$, we
have
$$
h^{\frac{|\beta |}{2}}\partial _{x,\xi }^\beta a={\cal
  O}(1)m\left(\frac{h}{\alpha }
\right)^{\frac{|\beta |}{2}}|\Im w|^{-M(\beta )}. 
$$
Thus there exists $M\ge 0$ such that $a(x,hD_x)$ is of trace class and
\ekv{fu.28}
{
\Vert a(x,hD)\Vert_{\mathrm{tr}}\le Ch^{-n}\iint_{U\times {\bf R}^n}
m(x,\xi )dxd\xi\, |\Im w|^{-M},
}
provided that the integral converges. 

\par From (\ref{fu.26}), (\ref{fu.23}), we now get
$$
\Vert R_N^{(2)}\Vert_{\mathrm{tr}}\le Ch^{-n}|\Im w|^{-M(N)}\iint
\left(\frac{h}{\Lambda ^2\langle \xi \rangle} \right)^N dxd\xi ,
$$
and (\ref{fu.27}) then shows that we have the same estimate for $R_N$:
\ekv{fu.29}
{
\Vert R_N\Vert_{\mathrm{tr}}\le Ch^{-n}|\Im w|^{-M(N)}\iint
\left(\frac{h}{\Lambda ^2\langle \xi \rangle} \right)^N dxd\xi .
}
The contribution to this expression from the region where $\Lambda \ge
1/C$ is ${\cal O}(h^{N-n})|\Im w|^{-M(N)}$. 

\par The volume growth assumption (\ref{int.6.2}), that we now assume
for our fixed $z$, says that
\ekv{fu.30}
{
V(t):=\mathrm{vol\,}(\{ \rho \in T^*X;\, s\le t \})={\cal O}(t^\kappa
),\ 0\le t\ll 1,
}
for $0<\kappa \le 1$. The contribution to the integral in
(\ref{fu.29}) from the region $0\le s\le t_0$, $0<t_0\ll 1$, is equal
to some negative power of $|\Im w|$ times
\begin{eqnarray*}
&&{\cal O}(1)\int_0^{t_0}\left(\frac{h}{\alpha +t} \right)^N dV(t)\\
&=&
{\cal O}(1)\left[ \left(\frac{h}{\alpha +t}
  \right)^NV(t) \right]_{t=0}^{t_0}+{\cal
  O}(1)\int_0^{t_0}\frac{h^N}{(\alpha +t) ^{N+1}}V(t)dt\\
&=& {\cal O}(1)h^N+{\cal O}(1)h^N\int_0^{t_0}\frac{t^\kappa }{(\alpha +t)^{N+1}}dt.
\end{eqnarray*} 
The last integral is equal to
$$
\int _0^{t_0/\alpha }\frac{(\alpha s)^\kappa }{\alpha ^{N+1}(1+s)^{N+1}}\alpha
ds\le \alpha ^{\kappa -N}\int_0^\infty \frac{s^\kappa }{(1+s)^{N+1}}ds.
$$
Thus,
\ekv{fu.31}
{
\Vert R_N\Vert_{\mathrm{tr}}\le {\cal O}(1) h^{-n}\alpha ^{\kappa
}\left(\frac{h}{\alpha } \right)^N|\Im w|^{-M(N)}.
}

\par From (\ref{fu.26}), we get 
$$
(w-\frac{1}{\alpha }S)^{-1}=E_{N-1}-(w-\frac{1}{\alpha }S)^{-1}R_N.
$$
Write 
$$
E_{N-1}=\frac{1}{w-\frac{s}{\alpha }}+F_{N-1},\quad F_{N-1}\in S_\Lambda
(\frac{\alpha h}{\Lambda ^4\langle \xi \rangle^{2m+1}}).
$$
More precisely we do this for each $E_{N-1}^{(j)}$ in (\ref{fu.25}). Then
quantize and plug this into (\ref{fu.2}):
\eekv{fu.32}
{
\chi (\frac{1}{\alpha }S)&=&-\frac{1}{\pi} \int \frac{\partial
    \widetilde{\chi }}{\partial
    \overline{w}}\mathrm{Op}_h(\frac{1}{w-\frac{s}{\alpha }})L(dw)
-\frac{1}{\pi }\int \frac{\partial
    \widetilde{\chi }}{\partial
    \overline{w}}F_{N-1} L(dw)
}
{&&-\frac{1}{\pi} \int \frac{\partial
    \widetilde{\chi }}{\partial
    \overline{w}}(w-\frac{1}{\alpha }S)^{-1}R_N L(dw)=:\mathrm{I}+\mathrm{II}+\mathrm{III}.
}
Here by definition,
$$
\mathrm{Op}_h\left( \frac{1}{w-\frac{s}{\alpha
  }}\right)=\sum_{j=1}^s\widetilde{\chi }_j\mathrm{Op}_h\left(\frac{1}{w-\frac{s}{\alpha
  }} \right)\chi _j
$$
with the coordinate dependent quantization appearing to the right. 
$$
\mathrm{tr\,}(-\frac{1}{\pi }\int \frac{\partial \widetilde{\chi
  }}{\partial \overline{w}}(w)\widetilde{\chi }_j \mathrm{Op}_h\left( \frac{1}{w-\frac{s}{\alpha
  }}\right) \chi _j L(dw))
$$
is equal to 
\begin{eqnarray*}
\frac{1}{(2\pi h)^n}\iint -\frac{1}{\pi }\int 
\frac{\partial \widetilde{\chi }}
{\partial \overline{w}}(w) \frac{1}{w-\frac{s}{\alpha
  }}L(dw)\chi _j(x)dxd\xi \\ =\frac{1}{(2\pi h)^n}\iint \chi (\frac{s(x,\xi
  )}{\alpha })\chi _j(x)dxd\xi ,
\end{eqnarray*}
so 
\ekv{fu.33}{
\mathrm{tr\,}(\mathrm{I})=\frac{1}{(2\pi h)^n}\iint \chi (\frac{s(x,\xi
  )}{\alpha })dxd\xi .
}
As at the last estimate in the proof of Proposition 4.4 in 
\cite{HaSj} we see that this quantity is ${\cal O}(\alpha
^\kappa h^{-n})$ and more generally, 
$$
\Vert \mathrm{I}\Vert_{\mathrm{tr}}={\cal O}(\alpha ^\kappa h^{-n}).
$$

\par For II, we get, using the fact that the symbol is holomorphic in
$w$ for large $\xi $,
\begin{eqnarray*}
\Vert \mathrm{II}\Vert_{\mathrm{tr}}&=&{\cal O}(h^{-n})\iint_{|\xi |\le
  C}\frac{h\alpha }{(\alpha +s)^2}dxd\xi \\
&=& {\cal O}(1)h^{-n}\frac{h}{\alpha
}\int_0^{t_0}\frac{1}{(1+\frac{t}{\alpha })^2}dV(t)\\
&=&{\cal O}(1)h^{-n}\frac{h}{\alpha}\left( \left[ (1+\frac{t}{\alpha
  })^{-2}V(t)
\right]_0^{t_0}+\int_0^{t_0}(1+\frac{t}{\alpha })^{-3}V(t) \frac{dt}{\alpha
} 
\right)\\
&=&{\cal O}(1)h^{-n}\frac{h}{\alpha }(\alpha ^2+\alpha ^\kappa
\int_0^{t_0}
(1+\frac{t}{\alpha })^{-3}\left(\frac{t}{\alpha } \right)^\kappa
\frac{dt}{\alpha })\\
&=&{\cal O}(1)\frac{\alpha ^\kappa }{h^n}\frac{h}{\alpha }.
\end{eqnarray*}

\par It is also clear that 
$$\Vert \mathrm{III}\Vert_{tr}={\cal O}(1)\frac{\alpha ^\kappa
}{h^n}\left(\frac{h}{\alpha } \right)^N.$$

Summing up our estimates, we get under the assumption (\ref{fu.30})
(equivalent to (\ref{int.6.2})) the following result:
\begin{prop}\label{fu6}
Let $\chi \in C_0^\infty ({\bf R})$. For $0<h\le \alpha <1$, we have
\ekv{fu.34}
{
\Vert \chi (\frac{1}{\alpha }S)\Vert_{\mathrm{tr}}={\cal
  O}(1)\frac{\alpha ^\kappa }{h^n},
}
\ekv{fu.35}
{
\mathrm{tr\,}\chi (\frac{1}{\alpha }S)=\frac{1}{(2\pi h)^n}
\iint \chi (\frac{s(x,\xi )}{\alpha })dxd\xi +{\cal O}(\frac{\alpha
  ^\kappa }{h^n}\frac{h}{\alpha }).
}
\end{prop}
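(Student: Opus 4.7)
The plan is to combine the parametrix constructed just above with the Cauchy--Riemann--Green--Stokes representation (\ref{fu.2}) and exploit the volume growth hypothesis (\ref{fu.30}) to trade factors of $h/\alpha$ for powers of $\alpha^{\kappa}$. Concretely, I would start from the parametrix identity
$$(w-\frac{1}{\alpha}S)^{-1}=E_{N-1}-(w-\frac{1}{\alpha}S)^{-1}R_N,$$
decompose $E_{N-1}$ as $(w-s/\alpha)^{-1}+F_{N-1}$ with $F_{N-1}\in S_\Lambda(\alpha h\Lambda^{-4}\langle\xi\rangle^{-2m-1})$ localized through the partition of unity $\{\widetilde\chi_j,\chi_j\}$, and insert this in (\ref{fu.2}). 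This yields the three terms $\mathrm{I}+\mathrm{II}+\mathrm{III}$ already displayed in (\ref{fu.32}): the principal piece built from $(w-s/\alpha)^{-1}$, a subprincipal piece from $F_{N-1}$, and a remainder involving $(w-\frac{1}{\alpha}S)^{-1}R_N$.

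For the trace bounds I would rely on the symbolic trace norm estimate (\ref{fu.28}), which bounds $\Vert a(x,hD)\Vert_{\mathrm{tr}}$ by $h^{-n}$ times an integral of $m(x,\xi)$ weighted by a power of $|\Im w|^{-1}$. Applied to the symbol $1/(w-s/\alpha)$ (which lies in $S_\Lambda(\alpha/(\Lambda^2\langle\xi\rangle^{2m}))$), the crucial step is to split the $(x,\xi)$-integral into $|\xi|\gtrsim 1$ (where $\Lambda\asymp 1$ and the symbol is controlled by the elliptic bound) and the compact region, where I would integrate with respect to the coarea measure $dV(t)$ via (\ref{fu.30}) and integrate by parts exactly as in the estimate carried out for II in the preceding paragraph. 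Since $\int_0^\infty s^\kappa/(1+s)^{N+1}ds$ is finite for $N>\kappa$, this yields bounds of the form $\mathcal{O}(\alpha^\kappa h^{-n})$ for $\mathrm I$, $\mathcal{O}(\alpha^\kappa h^{-n}\cdot h/\alpha)$ for $\mathrm{II}$, and $\mathcal{O}(\alpha^\kappa h^{-n}(h/\alpha)^N)$ for $\mathrm{III}$, which together give (\ref{fu.34}).

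For the trace formula (\ref{fu.35}), the contribution $\mathrm{II}+\mathrm{III}$ is already absorbed in the error term by the estimates above. The leading contribution $\mathrm{tr}(\mathrm{I})$ is evaluated by using that each coordinate piece $\widetilde\chi_j\,\mathrm{Op}_h((w-s/\alpha)^{-1})\chi_j$ has a symbolically integrable kernel, applying Fubini to pull the $w$-integral inside, and using $-\frac{1}{\pi}\int\partial_{\bar w}\widetilde\chi(w)(w-s/\alpha)^{-1}L(dw)=\chi(s/\alpha)$ together with $\sum_j\chi_j=1$, which gives exactly $\frac{1}{(2\pi h)^n}\iint \chi(s(x,\xi)/\alpha)\,dxd\xi$ as in (\ref{fu.33}).

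The main technical obstacle I anticipate is bookkeeping the $|\Im w|^{-M}$ losses against the $\partial_{\bar w}\widetilde\chi=\mathcal{O}(|\Im w|^\infty)$ gain from the almost-holomorphic extension, so that each of $\mathrm{I},\mathrm{II},\mathrm{III}$ is effectively evaluated by a $w$-integral of ordinary size. Once that is in place, the only substantive analytic input is the coarea rewriting combined with (\ref{fu.30}) and a single integration by parts in $t$, which simultaneously produces the $\alpha^\kappa$ factor and shows that $\mathrm{II}$ is genuinely smaller than $\mathrm{I}$ by a factor $h/\alpha$, as required by (\ref{fu.35}).
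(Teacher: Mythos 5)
Your proposal reproduces the paper's argument essentially step for step: the parametrix identity $(w-\tfrac{1}{\alpha}S)^{-1}=E_{N-1}-(w-\tfrac{1}{\alpha}S)^{-1}R_N$, the splitting $E_{N-1}=(w-s/\alpha)^{-1}+F_{N-1}$, the decomposition $\mathrm{I}+\mathrm{II}+\mathrm{III}$ inserted into the Helffer--Sj\"ostrand formula, the trace-norm bound (\ref{fu.28}), the coarea rewriting with (\ref{fu.30}) and one integration by parts, and the Fubini/Cauchy evaluation of $\mathrm{tr}(\mathrm{I})$ giving (\ref{fu.33}). This is precisely how the paper proves Proposition~\ref{fu6}, including the observation that the $|\Im w|^{-M}$ losses are harmless against $\partial_{\bar w}\widetilde\chi=\mathcal{O}(|\Im w|^\infty)$.
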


\begin{remark}\label{fu7}
Using simple $h$-pseudodifferential calculus (for instance as in
the appendix of \cite{Sj1}, we see that if we redefine $S$ as in
(\ref{fu.1.5}), then in each local coordinate chart,
$S=\mathrm{Op}_h(S)$, where $S\equiv s\,\mathrm{mod\,}S_{1,0}(h\langle
\xi \rangle^{-1})$ and $s$ is now redefined as
\ekv{fu.36}
{
s(x,\xi )=\left(\frac{|p(x,\xi )-z|}{|\widetilde{p}(x,\xi )-z|} 
\right)^2.
} 
The discussion goes through without any changes (now with $m=0$) and
we still have Proposition \ref{fu6} with the new choice of $S$, $s$.
\end{remark}

In the remainder of this section, we choose $S$, $s$ as in
(\ref{fu.1.5}), (\ref{fu.36}). In this case we notice that $S$ is a
trace class perturbation of the identity, whose symbol is $1+{\cal
  O}(h^\infty /\langle \xi \rangle^\infty )$ and similarly for all its
derivatives, in a region $|\xi |\ge \mathrm{Const}$.

\par Let $0\le \chi \in C_0^\infty ([0,\infty [)$ with $\chi (0)>0$ and
let $\alpha _0>0$ be small and fixed. Using standard
pseudodifferential calculus in the spirit of \cite{MeSj}, we get
\ekv{fu.37}
{
\ln\det (S+\alpha _0\chi (\frac{1}{\alpha _0}S))=
\frac{1}{(2\pi h)^n}(\iint \ln (s+\alpha _0\chi (\frac{1}{\alpha
  _0}s))dxd\xi +{\cal O}(h)).
}

\par Extend $\chi $ to be an element of $C_0^\infty ({\bf R};{\bf C})$
in such a way that $t+\chi (t)\ne 0$ for all $t\in {\bf R}$. As in
\cite{HaSj}, we use that 
\ekv{fu.39}
{
\frac{d}{dt}\ln (E+t\chi (\frac{E}{t}))=\frac{1}{t}\psi (\frac{E}{t}),
}
where
\ekv{fu.39.5}
{
\psi (E)=\frac{\chi (E)-E\chi '(E)}{E+\chi (E)},
}
so that $\psi \in C_0^\infty ({\bf R})$. By standard functional
calculus for self-adjoint operators, we have 
\ekv{fu.40}
{
\frac{d}{dt}\ln \det (S+t\chi (\frac{S}{t}))=\mathrm{tr\,}
\frac{1}{t}\psi (\frac{S}{t}).
}
Using (\ref{fu.35}), we then get for $t\ge \alpha \ge h>0$:
$$
\frac{d}{dt}\ln\det (S+t\chi (\frac{1}{t}S))
=\frac{1}{(2\pi h)^n}(\iint \frac{1}{t}\psi (\frac{s}{t})dxd\xi +{\cal
  O}(ht^{\kappa -2})).
$$
Integrating this from $t=\alpha _0$ to $t=\alpha $ and using
(\ref{fu.37}), (\ref{fu.39}), we get 
\ekv{fu.41}
{
\ln \det (S+\alpha \chi (\frac{1}{\alpha }S))=
\frac{1}{(2\pi h)^n}(\iint \ln (s+\alpha \chi (\frac{s}{\alpha
}))dxd\xi +{\cal O}(\frac{h}{\alpha })r_\kappa (\alpha )),
}
where $r_\kappa (\alpha )=\alpha ^\kappa $ when $\kappa <1$, and
$r_1(\alpha )=\alpha \ln \alpha $.

\par Improving the calculation prior to (4.22) in \cite{HaSj}, we get
\begin{eqnarray*}
\iint \ln (s+\alpha \chi (\frac{s}{\alpha }))dxd\xi &=&\iint \ln
(s)dxd\xi +\int_0^\alpha \iint \frac{1}{t}\psi (\frac{s}{t})dxd\xi \,
dt\\
&=&\iint \ln
(s)dxd\xi +\int_0^\alpha t^{\kappa -1} dt\\
&=&\iint \ln (s)dxd\xi +{\cal O}(\alpha ^\kappa ).
\end{eqnarray*}
and together with (\ref{fu.41}) this leads to 
\begin{prop}\label{fu8}
If $0\le \chi \in C_0^\infty ([0,\infty [)$, $\chi (0)>0$, we have
uniformly for $0<h\le \alpha \ll 1$
\ekv{fu.42}
{
\ln\det (S+\alpha \chi (\frac{1}{\alpha }S))=\frac{1}{(2\pi h)^n}
(\iint \ln s(x,\xi )dxd\xi +{\cal O}(\alpha ^\kappa \ln \alpha )).
}
Here the remainder term can be replaced by ${\cal O}(\alpha ^\kappa )$
when $\kappa <1$ and by ${\cal O}(\alpha +h\ln \alpha )$ when $\kappa =1$.
\end{prop}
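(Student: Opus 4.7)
The plan is to treat $\ln\det(S+\alpha\chi(S/\alpha))$ as a function of the parameter $\alpha$ on the interval $[h,\alpha_0]$, anchoring at a fixed small endpoint $\alpha_0$ and running a differentiation-integration argument whose infinitesimal generator is covered by Proposition \ref{fu6}. Three ingredients: (i) a classical Weyl formula at $t=\alpha_0$, (ii) the spectral identity $\frac{d}{dt}\ln\det(S+t\chi(S/t))=\mathrm{tr}\,\frac{1}{t}\psi(S/t)$ with $\psi$ smooth and compactly supported, and (iii) a careful evaluation of the Lebesgue integral $\iint \ln(s+\alpha\chi(s/\alpha))\,dxd\xi$ using the volume growth assumption \eqref{fu.30}.

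First I would anchor at $t=\alpha_0$ with $\alpha_0\in(0,1)$ a small fixed constant. Having extended $\chi$ to $C_0^\infty(\mathbb{R};\mathbb{C})$ so that $t+\chi(t)\neq 0$ everywhere, the operator $S+\alpha_0\chi(S/\alpha_0)$ lies in the standard (non-exotic) $h$-pseudodifferential calculus at scale $\alpha_0\asymp 1$, and its symbol is uniformly bounded below in modulus. A standard Meinrenken--Sj\"ostrand type trace formula for $\ln\det$ then yields \eqref{fu.37}, namely
\[
\ln\det\bigl(S+\alpha_0\chi(S/\alpha_0)\bigr)=\frac{1}{(2\pi h)^n}\Bigl(\iint \ln(s+\alpha_0\chi(s/\alpha_0))\,dxd\xi + \mathcal{O}(h)\Bigr).
\]

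The interpolation from $t=\alpha_0$ down to $t=\alpha$ is driven by the elementary scalar identity \eqref{fu.39}--\eqref{fu.39.5}, giving $\psi\in C_0^\infty(\mathbb{R})$, lifted to the operator level via self-adjoint functional calculus as in \eqref{fu.40}. Apply Proposition \ref{fu6} with the test function $\psi$ and parameter $t$ in place of $\alpha$: this is legitimate since $h\le\alpha\le t\le\alpha_0$, and it yields
\[
\mathrm{tr}\,\tfrac{1}{t}\psi(S/t)=\frac{1}{(2\pi h)^n}\Bigl(\iint \tfrac{1}{t}\psi(s/t)\,dxd\xi + \mathcal{O}(ht^{\kappa-2})\Bigr),
\]
the extra factor $t^{-1}$ absorbing into the error. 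Integrating $dt$ from $\alpha_0$ to $\alpha$ and subtracting from the anchor gives \eqref{fu.41}, where the error $\int_\alpha^{\alpha_0}ht^{\kappa-2}\,dt$ evaluates to $\mathcal{O}((h/\alpha)r_\kappa(\alpha))$ with $r_\kappa(\alpha)=\alpha^\kappa$ for $\kappa<1$ and $r_1(\alpha)=\alpha\ln(1/\alpha)$; this is precisely where the endpoint case $\kappa=1$ forces the logarithm.

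The remaining task is to replace $\ln(s+\alpha\chi(s/\alpha))$ by $\ln s$ under the symbol integral. Writing the difference as $\int_0^\alpha \iint \frac{1}{t}\psi(s/t)\,dxd\xi\,dt$ (using \eqref{fu.39} backwards and that the integrand vanishes at $t=0$ modulo the pure $\ln s$ contribution), the $(x,\xi)$-integral is handled by the layer-cake representation $dV(t)$ from \eqref{fu.30}: since $\psi$ is supported in a bounded set, integration by parts gives $\iint \frac{1}{t}\psi(s/t)\,dxd\xi=\mathcal{O}(t^{\kappa-1})$, and the outer $dt$-integral is $\mathcal{O}(\alpha^\kappa)$. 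Combining with the $(h/\alpha)r_\kappa(\alpha)$ error from the operator side and using $h\le\alpha$ to absorb $h\alpha^{\kappa-1}\le\alpha^\kappa$ when $\kappa<1$ yields the stated estimates. The main obstacle is precisely the bookkeeping at $\kappa=1$: there the logarithmic divergence of $\int_\alpha^{\alpha_0}dt/t$ survives and produces the $h\ln\alpha$ term, which must be tracked separately from the $\alpha$ coming from $\int_0^\alpha \mathcal{O}(1)\,dt$ on the symbol side.
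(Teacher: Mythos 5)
Your proposal matches the paper's argument step for step: anchoring at a fixed $\alpha_0$ via \eqref{fu.37}, differentiating in $t$ via \eqref{fu.39}--\eqref{fu.40}, applying Proposition \ref{fu6} with the test function $\psi$ to get the $\mathcal{O}(ht^{\kappa-2})$ trace remainder, integrating to reach \eqref{fu.41}, and then separately estimating the symbol integral $\iint\ln(s+\alpha\chi(s/\alpha))\,dxd\xi-\iint\ln s\,dxd\xi=\mathcal{O}(\alpha^\kappa)$ via the layer-cake/$dV(t)$ bound. The endpoint bookkeeping at $\kappa=1$ and the absorption using $h\le\alpha$ are handled exactly as in the text, so the proof is correct and essentially identical to the paper's.
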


\section{End of the proof}\label{en}
\setcounter{equation}{0}

Having established in Section \ref{fu} the analogues for manifolds of
the results in Section 4 of \cite{HaSj}, the remainder of the proof of
Theorem \ref{int1} is basically identical to the proof  in \cite{Sj1} 
for the ${\bf R}^n$ case. For that reason, we will only give a brief 
outline.

\par Let $P$ be as in (\ref{int.1}), (\ref{int.2}), classically
elliptic as in (\ref{int.3}) and let $p$ be the semiclassical
principal symbol. Assume (\ref{int.5}). To start with, we let $z\in \Omega $ be fixed and
assume (\ref{fu.30}), where $s=|p-z|^2$ is the semiclassical principal 
symbol of $S=(P-z)^*(P-z)$. Also, put 
\ekv{en.1}
{
S_z =P_z^*P_z,\ P_z=(\widetilde{P}-z)^{-1}(P-z).
}  

\par We fix the choice of an operator $P_0=P_{\delta _0}$ as in
(\ref{hs.1}), (\ref{hs.2}) (with $\delta =\delta _0$ still depending
on $h$) and define $\widetilde{P}_{\delta _0}$, $P_{\delta _0,z}$,
$S_{\delta_0 ,z}$, $S_{\delta _0}$ as in that section. As in
\cite{Sj1}, ${\cal D}(S_{\delta _0})=\{ u\in H^m(X);\, P_{\delta
  _0}u\in H^m(X)\}$. As there, we also introduce the self-adjoint
operator $T_{\delta _0}=(P_{\delta _0}-z)(P_{\delta _0}-z)^*$ with
domain ${\cal D}(T_{\delta _0})=\{ u\in H^m(X);\, P^*_{\delta
  _0}u\in H^m(X)\}$. In some fixed ($h$-independent) neighborhood of
$0$ the spectra of 
$S_{\delta _0}$, $T_{\delta _0}$ are discrete and coincide. If $0<\alpha \ll 1$, denote the (common)
eigenvalues in $[0,\alpha [$ by $t_1^2,t_2^2,...,t_N^2$, where $0\le
t_1\le t_2\le ...\le t_N$. Then, there are orthonormal families of
eigenfunctions, $e_1,...,e_N\in {\cal D}(S_{\delta _0})$,
$f_1,...,f_N\in {\cal D}(T_{\delta _0})$ such that
\ekv{en.2}{(P_{\delta _0}-z)e_j=t_jf_j,\ (P_{\delta
    _0}-z)^*f_j=t_je_j.}

\par Define $R_+:L^2(X)\to {\bf C}^N$, $R_-:{\bf C}^N\to L^2(X)$ by
\ekv{en.3}
{
R_+u(j)=(u|e_j),\ R_-u_-=\sum_{1}^N u_-(j)f_j.
}
The Grushin problem
\ekv{en.4}
{(P_{\delta _0}-z)u+R_-u_-=v,\ R_+u=v_+,}
has a unique solution $u=E^0v+E_+^0v_+\in H^m(X)$, $u_-=E^0_-v+E^0_{-+}v_+\in
{\bf C}^N$ for every $(v,v_+)\in L^2(X)\times {\bf C}^N$, and
$E^0_\pm$ and $E^0_{-+}$ can be given explicitly. In particular, 
$E^0_{-+}=-\mathrm{diag\,}(t_j)$.

\par Let now $P_\delta =P_{\delta _0}+\delta Q$ be a small perturbation
(in a suitable sense) of $P_{\delta _0}$. Then we still have a
wellposed problem after replacing $P_{\delta _0}$ by $P_\delta $ in
(\ref{en.4}) with the solution $u=E^{\delta}v+E_+^{\delta}v_+\in H^m$, $u_-=E^{\delta}_-v+
E^{\delta}_{-+}v_+\in {\bf C}^N$ and the new solution operators have Neumann
series expansions. In particular, 
\ekv{en.5}
{
E_{-+}^\delta = E_{-+}^0-\delta E_-^\delta QE_+^0+\delta ^2E_-^0QE^0QE_+^0
-...,}
where we can write the leading perturbation 
$-\delta E_-^0Qe_+^0=-\delta M$, where $M=(M_{j,k})_{1\le j,k\le N}$, 
$M_{j,k}=(Qe_k|f_j)$. When $Q$ is a multiplication operator,
$Qu(x)=q(x)u(x)$, then 
\ekv{en.6}
{
M_{j,k}=\int q(x)e_k(x)\overline{f}_j(x)dx.
}

\par Now, adopt the symmetry assumption (\ref{int'.7}). Then we can
replace the orthonormal family $f_j$ by the new orthonormal family of
eigenfunctions $\widetilde{f}_j=\overline{e}_j$ without changing the
singular values of $E_{-+}^0$, $E_{-+}^\delta $ and we get 
\ekv{en.7}
{
M_{j,k}=\int q(x)e_k(x)e_j(x) dx.
}
In \cite{Sj1} we showed how to find admissible potentials $q$ as in
(\ref{int.6.3}), (\ref{int.6.4}), such
that $M$ gets at least $N/2$ ``large'' singular values and this was
used in an iteration procedure in order to find perturbations of the
form $P_\delta $ where $q$ is an admissible potential, for which the
small singular values are not ``too small''. 

\par Strengthen the assumption on $\delta _0$ to 
\ekv{en.8}
{
\delta _0\le h.
}
Then combining Proposition \ref{fu6} with the perturbative functional
calculus in Section 4 of \cite{Sj1}, we obtain that for $0<h\le \alpha
\ll 1$, the number of eigenvalues of $S_{\delta _0}$ in $[0,\alpha ]$
satisfies $N={\cal O}(\alpha ^\kappa h^{-n})$. The iteration scheme in
Sections 5 to 7 in \cite{Sj1} now works without any changes and we
get the following analogue of Proposition 7.3 there:
\begin{prop}\label{en1}
We make the assumptions above (with $z$ fixed). Let
$s>\frac{n}{2}$, $0<\epsilon <s-\frac{n}{2}$, $N_1=\widetilde{M}+sM+
\frac{n}{2}$, $N_2=
2(N_1+n)+\epsilon _0$, where $\epsilon _0>0$ and $M,\widetilde{M}$ are
as in (\ref{int.6.4}). Let $L$, $R$ be 
$h$-dependent parameters as in (\ref{int.6.4}).
Let $0<\tau_0\le \sqrt{h}$ and let $N^{(0)}={\cal O}(h^{\kappa -n})$ 
be the number of singular
values of $P_{\delta _0}-z$ in $[0,\tau_0[$. Let $0<\theta <\frac{1}{4}$ and let
$N(\theta )\gg 1$ be sufficiently large. Define $N^{(k)}$, $1\le k\le
k_1$ iteratively in the following way. As long as $N^{(k)}\ge
N(\theta )$, we put $N^{(k+1)}=[(1-\theta )N^{(k)}]$ (the integer part
of $(1-\theta )N^{(k)}$). Let $k_0\ge 0$
be the last $k$ value we get in this way. For $k>k_0$ put $N^{(k+1)}=N^{(k)}-1$ until
we reach the value $k_1$ for which $N^{(k_1)}=1$.

\par Put $\tau_0^{(k)}=\tau_0h^{kN_2}$, $1\le k\le k_1+1$. Then there 
exists $q=q_h(x)$ of the
form \no{int.6.3}, satisfying \no{int.6.4}, so that by the
choice of $L$,
$$
\Vert q\Vert_{H^s}\le {\cal O}(1)h^{-N_1+\frac{n}{2}}, \
\Vert q\Vert_{L^\infty }\le {\cal O}(1)h^{-N_1}, 
$$
such that if $P_{\delta}=P_{\delta _0}+\frac{1}{C}\tau
_0h^{2N_1+n}q=P+\delta Q$, $\delta =\frac{1}{C}h^{N_1+n}\tau_0$, $Q=h^{N_1}q$,
we have the following estimates on the singular values of 
$P_{\delta }-z$:
\begin{itemize}
\item If $\nu >N^{(0)}$, we have 
$t_\nu (P_{\delta }-z)\ge (1-\frac{h^{N_1+n}}{C})t_\nu (P-z)$.
\item If $N^{(k)}<\nu \le N^{(k-1)},$ $1\le k\le k_1$, then $
t_\nu (P_\delta -z)\ge (1-{\cal O}(h^{N_1+n}))\tau_0^{(k)}$.

\item Finally, for $\nu =N^{(k_1)}=1$, we have  $
t_1(P_\delta -z)\ge (1-{\cal O}(h^{N_1+n}))\tau_0^{(k_1+1)}$.
\end{itemize}
\end{prop}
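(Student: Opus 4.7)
The plan is to follow the iterative scheme from Sections 5--7 of \cite{Sj1}, with the only genuinely new input being Proposition \ref{fu6} of the present paper, which replaces the trace-class estimates used in \cite{HaSj, Sj1} on $\mathbf{R}^n$. Since the potentials in the manifold setting carry no support constraint, the argument is in fact slightly simpler.

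First I would set up the Grushin problem (\ref{en.4}) associated with $P_{\delta_0}-z$ at the spectral scale $\alpha \asymp \tau_0^2$ applied to $S_{\delta_0} = (P_{\delta_0}-z)^*(P_{\delta_0}-z)$. Applying Proposition \ref{fu6} to a $\chi \in C_0^\infty(\mathbf{R})$ with $\chi = 1$ on $[0,\alpha]$ yields $N^{(0)} = \mathrm{tr}\,\mathbf{1}_{[0,\alpha]}(S_{\delta_0}) \leq \mathrm{tr}\,\chi(S_{\delta_0}/\alpha) = O(\alpha^\kappa h^{-n}) = O(h^{\kappa - n})$. Invoking the symmetry assumption (\ref{int'.7}) we may choose $\widetilde{f}_j = \overline{e}_j$, so that the leading perturbation term in (\ref{en.5}) is the symmetric matrix $M_{j,k} = \int q\, e_j e_k \, dx$ of (\ref{en.7}).

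Second is the probabilistic/combinatorial step which is at the heart of \cite{Sj1}: expand $q$ in the basis $\{\epsilon_k\}$ of $h^2\widetilde{R}$ up to index $D \asymp (L/h)^n$, and show that for a suitable admissible $q = q_h$ of the form (\ref{int.6.3}) with the stated $H^s$ and $L^\infty$ norms, the matrix $M$ has at least $\lceil N^{(0)}/2 \rceil$ singular values exceeding a threshold of size comparable to $\tau_0^{(1)}/\delta$. The argument goes through verbatim from Section 5 of \cite{Sj1}: one computes the Hilbert--Schmidt norm of the truncation of $M$ to a large subspace and uses that the $e_j$ are $H_h^s$-normalized (by the fact, reviewed in Section \ref{hs}, that the relevant spectral projection maps $H_h^{-s} \to H_h^s$ boundedly) to estimate the tail arising from modes $\mu_k > L$; Proposition \ref{al4} now replaces the $\mathbf{R}^n$ multiplication estimate.

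Third, I would iterate. After the first perturbation $P_{\delta_0} \leadsto P_{\delta_0} + \delta_1 Q_1$, the number of singular values of $P_\delta - z$ below the enlarged threshold $\tau_0^{(1)} = \tau_0 h^{N_2}$ drops to $N^{(1)} = \lfloor(1-\theta)N^{(0)}\rfloor$, while the singular values $t_\nu$ with $\nu > N^{(0)}$ are only mildly affected (by Ky Fan / Weyl singular-value perturbation inequalities, together with the choice $N_2 = 2(N_1+n) + \epsilon_0$ ensuring the jump $\tau_0^{(k+1)}/\tau_0^{(k)} = h^{N_2}$ dominates all accumulated perturbation errors). Repeat the construction at level $\tau_0^{(1)}$, obtaining $N^{(2)}$, etc., until $N^{(k_0)} < N(\theta)$; then decrement by one per step until $N^{(k_1)} = 1$. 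The sum of all perturbations is $\sum \delta_k = O(\delta)$ by geometric decay, and $q := \sum q_k$ still satisfies (\ref{int.6.3}) with the stated $H^s$ and $L^\infty$ norms because the bounds are dominated by the first step.

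The main obstacle is the bookkeeping in the iteration: one must verify at each step that the perturbation of the $(N^{(k)} - N^{(k+1)})$-st singular value by the subsequent potentials $q_{k+1}, q_{k+2}, \ldots$ remains much smaller than $\tau_0^{(k+1)}$, which is precisely the role of the gap factor $h^{N_2}$ with $N_2 = 2(N_1+n) + \epsilon_0$. Also, one must check that the final $q$ is a \emph{single} admissible potential satisfying the norm bounds of (\ref{int.6.4}) uniformly, not just the sum of admissible ones at different scales. Both verifications go through exactly as in \cite{Sj1}, since the compact manifold setting only affects the functional-analytic input (Sections \ref{al}--\ref{fu}), which we have already established.
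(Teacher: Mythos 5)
Your proposal follows essentially the same route as the paper: establish the manifold analogues of the required trace-class and multiplication estimates (Propositions \ref{al4} and \ref{fu6}), then invoke the iteration scheme of Sections 5--7 of \cite{Sj1} verbatim, noting that the absence of a support condition on $q$ is only a simplification. The one small point you gloss over is that the bound $N^{(0)}=\mathcal{O}(h^{\kappa-n})$ requires combining Proposition \ref{fu6} (which is stated for $S$ built from $P$, not $P_{\delta_0}$) with the perturbative functional calculus from Section 4 of \cite{Sj1} to pass to $S_{\delta_0}$, a step the paper flags explicitly; otherwise the argument matches the paper's.
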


\par As shown in \cite{Sj1} we have an equivalence between lower
bounds for the small singular values of $P_\delta -z$ in the above
proposition and for the singular values of $E_{-+}^\delta $ appearing
in the solution of the Grushin problem for $P_\delta -z$ (and that
is used in the proof of the proposition). We also have an equivalence
between lower bounds for the small singular values of $P_\delta -z$
and those of $P_{\delta ,z}$. For the latter operator we have a well
posed Grushin problem analogous to (\ref{en.4}) and an equivalence
between lower bounds for the small singular values of $P_{\delta ,z}$
and for the singular values of $E_{-+}^{\delta ,z}$, appearing in the
solution of the new Grushin problem. Using perturbative functional 
calculus we also have an asymptotic formula for $\ln \det {\cal
  P}_{\delta ,z}$, where 
$${\cal P}_{\delta ,z}=\left(\begin{array}{ccc}P_{\delta ,z}
    &R_-^z\\R_+^z &0 \end{array}\right)$$ is the matrix associated to
the new Grushin problem. As showed in \cite{HaSj} by means of
calculations from \cite{SjZw}, we have 
\ekv{en.10}
{
\det P_{\delta ,z}=\det {\cal P}_{\delta ,z}\det E_{-+}^{\delta ,z}.
}

\par The perturbative functional calculus gives a general upper bound
on $\ln \det P_{\delta ,z}$, and for the special admissible
perturbation in Proposition \ref{en1}, we have a lower bound on $\ln
|\det E_{-+}^{\delta ,z}|$ (using the lower bound on the singular
values of $E_{-+}^{\delta ,z}$ and the fact the modulus of the
determinant is equal to the product of the singular values). We get as
in \cite{Sj1}:
\begin{prop}\label{en2}
For the special admissible perturbation $P_\delta $ in
Proposition \ref{en1}, we have 
\eekv{en.11}
{
&&\ln |\det P_{\delta ,z}|\ge }
{&&\frac{1}{(2\pi h)^n}
\left( \iint \ln |p_z|dxd\xi -{\cal O}\left(
(h^{\kappa }+h^n\ln \frac{1}{h})(\ln \frac{1}{\tau_0}+
(\ln \frac{1}{h})^2)\right)
\right) .
}
\end{prop}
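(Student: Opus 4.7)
The plan is to exploit the factorization (\ref{en.10}),
$$
\ln |\det P_{\delta ,z}|=\ln |\det {\cal P}_{\delta ,z}|+\ln |\det E_{-+}^{\delta ,z}|,
$$
and to estimate the two summands separately. For the first summand I would apply the functional calculus asymptotic of Proposition \ref{fu8} (in the setting of Remark \ref{fu7}, with $S=P_z^*P_z$); for the second I would invoke the groupwise lower bounds on singular values provided by Proposition \ref{en1}.

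For $\ln |\det {\cal P}_{\delta ,z}|$, I would take $\alpha =h$ and a fixed $0\le \chi \in C_0^\infty ([0,\infty [)$ with $\chi (0)>0$. The Grushin problem ${\cal P}_{\delta ,z}$ is designed precisely so that ${\cal P}_{\delta ,z}^*{\cal P}_{\delta ,z}$ is bounded below, and by a block computation with respect to the orthogonal decomposition $L^2={\rm span}(e_1,\ldots ,e_N)\oplus {\rm span}(e_1,\ldots ,e_N)^{\perp }$, using that $R_\pm ^z$ act essentially isometrically on the small-eigenvalue subspace, it may be compared with $S_{\delta ,z}+\alpha \chi (S_{\delta ,z}/\alpha )$ up to a correction of $\log\det $ of order ${\cal O}(N)={\cal O}(h^{\kappa -n})$. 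Applying Proposition \ref{fu8} with $s=|p_z|^2$, where $p_z=(p-z)/(\widetilde{p}-z)$ is the semi-classical principal symbol of $P_z$ and $\kappa $ is as in (\ref{int.6.2}), then gives
$$
2\ln |\det {\cal P}_{\delta ,z}|=\frac{1}{(2\pi h)^n}\Bigl(2\iint \ln |p_z(x,\xi )|\,dxd\xi +{\cal O}(h^\kappa \ln (1/h))\Bigr),
$$
which is the leading term of (\ref{en.11}) with an error well within the stated bound.

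For $\ln |\det E_{-+}^{\delta ,z}|$, the singular values of $E_{-+}^{\delta ,z}$ coincide, up to factors $1+{\cal O}(h^{N_1+n})$, with the small singular values $t_\nu $ of $P_\delta -z$ controlled by Proposition \ref{en1}. Hence
$$
-\ln |\det E_{-+}^{\delta ,z}|=\sum _{\nu =1}^{N^{(0)}}\ln (1/t_\nu )\le \sum _{k=1}^{k_1}(N^{(k-1)}-N^{(k)})\ln (1/\tau _0^{(k)})+\ln (1/\tau _0^{(k_1+1)})+{\cal O}(1).
$$
Substituting $\ln (1/\tau _0^{(k)})=\ln (1/\tau _0)+kN_2\ln (1/h)$, applying Abel summation, and using $\sum _k N^{(k)}={\cal O}(N^{(0)})={\cal O}(h^{\kappa -n})$ from the geometric decay in the exponential phase of the iteration, $k_1-k_0={\cal O}(1)$ in the linear phase, and $k_1={\cal O}(\ln (1/h))$ for the total number of steps, one obtains
$$
-\ln |\det E_{-+}^{\delta ,z}|\le {\cal O}(1)\bigl(h^{\kappa -n}(\ln (1/\tau _0)+\ln (1/h))+(\ln (1/h))^2\bigr),
$$
which upon inclusion in the bracket of (\ref{en.11}) is majorized by $h^\kappa \ln (1/\tau _0)+h^\kappa \ln (1/h)+h^n(\ln (1/h))^2$, well inside the allowed error.

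The main obstacle, I expect, will be the block computation in the first step: one has to show rigorously, within the slightly exotic pseudodifferential calculus of Section \ref{fu}, that ${\cal P}_{\delta ,z}^*{\cal P}_{\delta ,z}$ and $S_{\delta ,z}+\alpha \chi (S_{\delta ,z}/\alpha )$ have log-determinants that agree up to the claimed error, controlling in particular the off-diagonal Grushin cross terms between the small-eigenvalue subspace and its complement. Once that identification is secured, assembly of the two contributions via (\ref{en.10}) yields (\ref{en.11}).
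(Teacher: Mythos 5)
Your proposal follows essentially the same route as the paper's sketch: factorize via (\ref{en.10}), use the functional-calculus asymptotics of Proposition \ref{fu8} (in the form of Remark \ref{fu7}) for $\ln |\det {\cal P}_{\delta ,z}|$, and combine with the groupwise singular-value lower bounds of Proposition \ref{en1} via Abel summation for $\ln |\det E_{-+}^{\delta ,z}|$. The paper itself only indicates this and refers to \cite{Sj1} for the details you supply; your bookkeeping in the $E_{-+}$ step is correct, and the comparison of $\ln \det ({\cal P}_{\delta ,z}^{*}{\cal P}_{\delta ,z})$ with $\ln \det (S_{\delta ,z}+\alpha \chi (S_{\delta ,z}/\alpha ))$ that you rightly flag as the technical crux is precisely the perturbative functional-calculus step carried out in Section 4 of \cite{Sj1} (and \cite{HaSj}), now backed on the manifold by Section \ref{fu}.
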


On the other hand, for more general operators of the form $P_\delta
=P_{\delta _0}+\tau_0h^{2N_1+n}q$ with $q$ admissible as in
(\ref{int.6.3}), (\ref{int.6.4}) we get as in \cite{Sj1} the upper
bound:
\ekv{en.12}
{
\ln |\det P_{\delta ,z}|\le \frac{1}{(2\pi h)^n}
\left( \iint \ln |p_z|dxd\xi +{\cal O}\left(h^{\kappa }\ln \frac{1}{h}\right)
\right) .
}

Section 8 of \cite{Sj1} (based on Jensen type arguments in the $\alpha
$-variables) now applies and shows that with probability close to 1,
we have 
\ekv{en.13}
{
\ln |\det P_{\delta ,z}|\approx \frac{1}{(2\pi h)^n}\iint \ln
|p_z|dxd\xi .
}

So, far $z$ was fixed and we now let it vary in a neighborhood of
$ \Gamma $, recalling that the eigenvalues $P_\delta $ in this region
coincide with the zeros of the holomorphic function $F_\delta (z)=\det
P_{\delta ,z}$. Assuming now that (\ref{int.6.2}) holds uniformly for
$z$ in a neighborhood of $\partial \Gamma $, we can then conlude the
proof as in Section 9 of \cite{Sj1}, by applying a general result
about the number of zeros of holomorphic functions with exponential
growth, from \cite{HaSj}. Recall that this result (applied to
$F_\delta (z)$) requires an upper bound on $\ln |F_\delta (z)|$ in a
fixed neighborhood of $\partial \Gamma $, in our case provided by
(\ref{en.12}), as well as a corresponding lower bound at finitely many
points along $\partial \Gamma $. The latter is provided by the lower
bound part of (\ref{en.13}) and holds with probability close to 1.


\begin{thebibliography}{30}
\bibitem{Bo} W.~Bordeaux-Montrieux, {\it Loi de Weyl presque
s\^ure et r\'esolvante pour des op\'era\-teurs diff\'erentiels
non-autoadjoints,} Thesis, CMLS, Ecole Polytechnique.
\bibitem{DiSj} M.~Dimassi, J.~Sj\"ostrand, {\it Spectral asymptotics
    in the semi-classical limit, } London Math.~Soc.~Lecture Notes
  Ser., 268, Cambridge Univ.~Press, (1999).
\bibitem{GoKr} I.C.~Gohberg, M.G.~Krein, {\it Introduction to the theory of linear
  non-selfadjoint operators, }Translations of mathematical monographs,
  Vol 18, AMS, Providence, R.I.~(1969).
\bibitem{GrSj} A.~Grigis, J.~Sj\"ostrand, {\it Microlocal analysis for
  differential operators, }London Math.~Soc.~Lecture Notes
  Ser., 196, Cambridge Univ.~Press, (1994).
\bibitem{Ha2} M.~Hager, 
{\it Instabilit\'e spectrale semiclassique pour des op\'erateurs 
non-autoadjoints.~I.~Un mod\`ele,}  
Ann.~Fac.~Sci.~Toulouse Math.~(6)15(2)(2006), 243--280.
\bibitem{Ha} M.~Hager, {\it Instabilit\'e spectrale semiclassique 
d'op\'erateurs non-autoadjoints.~II.}  Ann.~Henri Poincar\'e,  
7(6)(2006), 1035--1064.
\bibitem{HaSj} M.~Hager, J.~Sj\"ostrand, {\it Eigenvalue asymptotics
for randomly perturbed non-selfadjoint operators,} \\
Math.~Annalen, 342(1)(2008), 177--243.\\
http://arxiv.org/abs/math/0601381
\bibitem{Ho} L.~H\"ormander, {\it Fourier integral operators I,} Acta
  Math., 127(1971), 79--183.
\bibitem{IaSjZw} A.~Iantchenko, J.~Sj\"ostrand, M.~Zworski, 
{\it Birkhoff normal forms in semi-classical inverse problems,}  
Math.~Res.~Lett.~9(2-3)(2002),  337--362.
\bibitem{MeSj} A.~Melin, J.~Sj\"ostrand, Determinants of
pseudodifferential operators and complex deformations of phase space.
Methods and Applications of Analysis, 9(2)(2002), 177-238.\\
http://xxx.lanl.gov/abs/math.SP/0111292
\bibitem{Ro} D.~Robert, {\it Autour de l'approximation semi-classique,} 
Progress in Mathematics, 68. Birkh\"auser Boston, Inc., Boston, MA, 1987.
\bibitem{Se}  R.T.~Seeley, {\it Complex powers of an elliptic
    operator.}
  1967  Singular Integrals (Proc.~Sympos.~Pure Math., Chicago, Ill., 1966)  pp.~288--307 Amer.~Math.~Soc., Providence, R.I.
\bibitem{Sj} J.~Sj\"ostrand, \it Resonances for bottles and trace
formulae, \rm Math.~Nachr., 221(2001), 95--149.

\bibitem{Sj1} J.~Sj\"ostrand, 
{\it Eigenvalue distribution for non-self-adjoint operators with small 
multiplicative random perturbations,}\\ http://arxiv.org/abs/0802.3584

\bibitem{SjZw} J.~Sj\"ostrand, M.~Zworski, 
{\it Fractal upper bounds on the
density of semiclassical resonances,}  Duke Math J,
137(3)(2007), 381-459. 

\bibitem{SjZw2} J.~Sj\"ostrand, M.~Zworski, {\it Elementary linear
    algebra for advanced spectral problems, }
  http://arxiv.org/abs/math/0312166, Ann.~Inst.~Fourier, to appear.

\bibitem{WuZw} J.~Wunsch, M.~Zworski, {\it The FBI transform on compact
  $C^\infty $ manifolds, } Trans.~A.M.S., 353(3)(2001), 1151--1167.

\end{thebibliography}
\end{document}